\documentclass{amsart}
\usepackage{graphicx}
\usepackage{amsfonts}
\newtheorem{tm}{Theorem}

\newtheorem{defi}{Definition}

\newtheorem{rem}{Remark}
\newtheorem{rems}{Remarks}
\newtheorem{lm}{Lemma}
\newtheorem{ex}{Example}

\newtheorem{prop}{Proposition}

\newtheorem{problem}{Problem}

\newcommand{\sgn}{{\rm sgn}}
\newcommand{\bsi}{\overline \sigma}

\newcommand{\eps}{\epsilon}
\newcommand{\si}{\sigma}
\newcommand{\tsi}{\widehat \si}

\begin{document}

\title{New aspects of Descartes' rule of signs}
\author{Vladimir Petrov Kostov, Boris Zalmanovich Shapiro}
 \address{Universit\'e C\^ote d'Azur, LJAD, France} 
\email{vladimir.kostov@unice.fr}
\address{Stockholm University, S-10691, Stockholm, Sweden}
\email{shapiro@math.su.se}

\begin{abstract}
Below we summarize some new developments in the area of distribution of roots 
and signs of real univariate polynomials  pioneered by R.~Descartes in the 
middle of the 17-th century. 
\end{abstract}

\dedicatory{To Ren\'e Descartes,  a polymath in philosophy and science }
\date{}

\keywords{Real univariate polynomial; sign pattern; admissible pair; 
Descartes'   rule of signs; Rolle's theorem} 
\subjclass[2010]{Primary 26C10;\; Secondary   30C15 }

\maketitle 

\section{Introduction}\label{sec:intro}

The classical Descartes' rule of signs claims that 
the number of positive roots of a real univariate polynomial is 
bounded by the number of sign changes in the sequence of its coefficients and it coincides with the latter number modulo $2$. 
It was published in French (instead of the usual at that time Latin) as a small portion of ``Sur la construction de probl\`emes solides ou plus que solide"  which is the third book of Descartes'  fundamental treatise ``La G\'eom\'etrie"   which, in its turn, is an appendix to his famous ``Discours de la m\'ethode".  It is in the latter chef d'oeuvre that Descartes developed his  analytic approach to geometric problems leaving practically all proofs  and details to an interested reader.  This interested reader turned out to be Frans van Schooten, a professor of mathematics at Leiden who together with his students undertook a tedious work of making Descartes' writings understandable, translating and  publishing them in the proper language,  that is Latin. (For the electronic version of this book see \cite {Gut}.)   Mathematical achievements of Descartes form a small 
fraction of his overall scientific and philosophical legacy and Descartes' rule of signs is a small but 
important fraction of his mathematical  heritage. 

Descartes' rule of signs has been studied and generalized by many authors over the years;  one of the earliest can be found in \cite{Fo}, see also \cite{Ca} and \cite{Ga}.  
(For some recent contributions, see \cite{AlFu, AJS,  Di,   FNoSh,  Gr, Ha}, to mention a few.) 

\medskip
In the present survey we summarize a relatively new development in this 
area which, to the best of our knowledge, was initiated  only in the 1990s, 
see \cite{Gr}. 

\medskip
For simplicity, we consider below only real univariate polynomials  with 
all non-vanishing coefficients. 
For a polynomial $P:=\sum _{j=0}^da_jx^j$ with fixed signs of its coefficients,  Descartes' rule of signs tells 
us what  possible values the number  of its real positive roots can have. For $P$ as above,  
we 
define the sequence of $\pm$ signs of length $d+1$ which we call 
the {\em sign pattern} (SP for short) of $P$. Namely, we say that a polynomial $P$ with all non-vanishing coefficients {\em defines 
the sign pattern $\sigma :=(s_d$, $s_{d-1}$, $\ldots$, $s_0)$} if 
$s_j=\sgn\; a_j$. 
Since the roots of the polynomials $P$  and $-P$ are the same, 
we can without loss of generality, assume that the first sign 
of a SP is always a~$+$.

It is true that for a given SP with $c$ sign changes 
(and hence with $p=d-c$ sign preservations), 
there always exist polynomials $P$ defining this sign pattern and having exactly 
$pos$ positive roots, where $pos=0, 2, \ldots, c$ if $c$ is even and 
$pos=1, 3, \ldots, c$ if $c$ is odd, see e.g. \cite{AlFu, Av}. (Observe that we do not 
impose 
 any restriction on the number of negative roots of these polynomials.) 

One can apply Descartes' rule of signs to the polynomial 
$(-1)^dP(-x)$ 
which has $p$ sign changes and $c$ sign preservations in the sequence of 
its coefficients and whose leading coefficient is positive. 
The roots of $(-1)^dP(-x)$ are obtained from the roots of $P(x)$ by changing their sign. 
Applying the above result of \cite{AlFu} to $(-1)^dP(-x)$ 
one obtains the existence 
of polynomials $P$ with exactly $neg$ negative roots, where 
$neg=0, 2, \ldots, p$ if $p$ is even and 
$neg=1, 3, \ldots, p$ if $p$ is odd.  (Here again we impose no requirement 
on the number of positive roots).      

\medskip
A natural question apparently for the first time raised in \cite{Gr} is whether one can  freely combine 
these two results about 
the numbers of positive and  negative roots. Namely, given a SP $\sigma$ with $c$ sign 
changes and $p=d-c$ sign preservations, we define its {\em admissible   pair} (AP for short) 
  as  $(pos, neg)$, where 
$pos \leq c$, $neg \leq p$ and the differences $c-pos$ and $p-neg$ are even. 
For the SP $\sigma$ as above, we call   $(c, p)$ the {\em Descartes' pair} of $\sigma$. The main question under consideration in this paper is as follows. 

\begin{problem}\label{pb1}
 Given a couple (SP, AP), does there exist a  
polynomial of degree $d$ with this SP and having exactly $pos$ positive 
and exactly $neg$ 
negative roots (and hence exactly $(d-pos-neg)/2$ complex conjugate pairs)?
\end{problem}

If such a polynomial exists, then we say that it {\em realizes} a given 
couple (SP, AP). 
The present paper discusses the current status of knowledge in this 
realization problem. 

\begin{ex}{\rm For $d=4$ and for the sign pattern $\sigma ^0:=(+,-,-,-,+)$,  
the following pairs and only them are admissible: 
$(2,2)$, $(2,0)$, $(0,2)$ and $(0,0)$.  The 
first of them is the Descartes' pair of $\sigma ^0$.}
\end{ex}
 
{\rm It is clear that if a couple (SP, AP) is realizable, 
then it can be realized by a polynomial with all  simple roots,  
because the property of having non-vanishing coefficients is  preserved under small perturbations 
of the roots.}

\medskip
In this short survey we present what is currently known about Problem~\ref{pb1}. After the pioneering observations of D.~J.~Grabiner \cite {Gr} which started this line of research, important contributions to Problem~\ref{pb1}  have been made by 
A.~Albouy and Y.~Fu \cite{AlFu} who, in particular, described all non-realizable combinations of the numbers of positive and negative roots and respective sign patterns up to degree $6$. Our  results on this topic which we summarize below  can be found in \cite{FoKoSh, FoKoSh1, ChGaKo} and \cite{KoDBAN07}--\cite{KoSh}. On the other hand, we find it surprising that such a natural classical question has not deserved any attention in the past and we hope that this survey will help to change the situation. 
 The current status of Problem~\ref{pb1} is not very satisfactory in spite of the complete results in degrees up to $8$ as well as several series of non-realizable cases in all degrees. There is still no general conjecture describing all non-realizable cases. It might happen that the answer to  Problem~\ref{pb1} in sufficiently high degrees is very complicated. 
 
On the other hand, besides Problem~\ref{pb1} as it is stated there is a significant number of related basic questions which can be posed in connection to the latter Problem and are still waiting for their researchers. (Very few  of them are listed in \S~\ref{sec:outook}.) 

\medskip
One should also add that there is a number of completely different directions in which mathematicians are trying to extend Descartes' rule of signs. They include, for example,  rule of signs for other univariate analytic functions including exponential functions, trigonometric functions and orthogonal polynomials, multivariate Descartes' rule of signs, tropical rule of signs, rule of signs in the complex domain etc., see e.g.  \cite{Di, FNoSh, Ha} and references therein. But we think that Problem~\ref{pb1} is the closest one to the original investigations by R.~Descartes himself.

\medskip
The structure of the paper is as follows. In \S~\ref{sec2} we provide the  information about the solution of Problem~\ref{pb1} in  degrees up to 11. In  \S~\ref{sec3} we present several infinite series of non-realizable couples  (SP, AP). Finally, in \S~\ref{sec4} we discuss two generalizations of Problem~\ref{pb1} and their partial solutions.  

\section{Solution of  the realization Problem~\ref{pb1} in small degrees}\label{sec2}
\subsection{Natural 
$\mathbb{Z}_2\times \mathbb{Z}_2$-action and 
degrees $d=1$, $2$ and $3$}

Let us start with the following useful observation. 

{\rm To shorten the list of cases (SP, AP) under consideration, 
we can use the following 
$\mathbb{Z}_2\times \mathbb{Z}_2$-action whose first generator acts by 

\begin{equation}\label{eqfirstgenerator}
P(x)\, \, \mapsto \, \, (-1)^dP(-x)
\end{equation} 
and the second one acts by 

\begin{equation}\label{eqsecondgenerator}
P(x)\, \, \mapsto \, \, P^R(x)\, \, :=\, \, x^dP(1/x)/P(0)\, \, .
\end{equation} 
Obviously, the first generator  
exchanges the components of the AP.
Concerning the second generator, to obtain the SP defined by the polynomial 
$P^R$ one  
 has to read  the SP defined by $P(x)$ backwards. The roots of $P^R$ are the 
reciprocals of these of $P$ which implies that both polynomials have  the same 
numbers of positive and negative roots. Therefore   the SPs which they define 
have  the same AP. 

\begin{rem}{\rm 
A priori the length of an orbit of any $\mathbb{Z}_2\times \mathbb{Z}_2$-action 
could be  $1$, $2$ or $4$, 
but for the above action, orbits of length $1$ do not exist since the second components of the SPs 
defined by the polynomials $P(x)$ and $(-1)^dP(-x)$ are always different. 
When an orbit of length $2$ occurs and $d$ is even, then both SPs are 
symmetric w.r.t. their 
middle points (hence their last components equal $+$).  Similarly when $d$ is odd, then 
one of the two SPs is symmetric w.r.t. 
its middle (with the last component equal to $+$) and the other one is 
anti-symmetric. Thus,  its last component equals~$-$.}Ê
\end{rem}

It is obvious that all pairs or quadruples  (SP, AP) constituting  a given orbit are 
simultaneously (non-)realizable.}

\medskip
As a warm-up exercise, let us consider degrees $d=1,2$ and $3$. In these cases, the answer to Problem~\ref{pb1} is positive. We 
give the list of SPs, with the respective values $c$ and $p$  
of their APs, and examples of polynomials realizing the couples (SP, AP). 
In order to shorten the list we consider only SPs beginning with two $+$ signs;  
the cases when these signs are $(+,-)$ are realized by the 
respective polynomials $(-1)^dP(-x)$. All quadratic factors in the Table below 
have no real roots.

$$\begin{array}{cccccc}
d&{\rm SP}&c&p&{\rm AP}&P\\ \\ 
1&(+,+)&0&1&(0,1)&x+1\\ \\  
2&(+,+,+)&0&2&(0,2)&x^2+3x+2=(x+1)(x+2)\\ \\ 
&&&&(0,0)&x^2+x+1\\ \\ 
&(+,+,-)&1&1&(1,1)&x^2+x-2=(x-1)(x+2)\\ \\ 
3&(+,+,+,+)&0&3&(0,3)&x^3+6x^2+11x+6=(x+1)(x+2)(x+3)\\ \\ 
&&&&(0,1)&x^3+3x^2+4x+2=(x+1)(x^2+2x+2)\\ \\ 
&(+,+,+,-)&1&2&(1,2)&x^3+2x^2+x-6=(x-1)(x+2)(x+3)\\ \\ 
&&&&(1,0)&x^3+5x^2+4x-10=(x-1)(x^2+6x+10)\\ \\ 
&(+,+,-,+)&2&1&(2,1)&x^3+x^2-24x+36=(x+6)(x-2)(x-3)\\ \\ 
&&&&(0,1)&x^3+2x^2-19x+30=(x+6)(x^2-4x+5)\end{array}$$ 
$$\begin{array}{cccccc}&(+,+,-,-)&1&2&(1,2)&x^3+x^2-4x-4=(x-2)(x+1)(x+2)\\ \\ 
&&&&(1,0)&x^3+2x^2-3x-10=(x-2)(x^2+4x+5)
\end{array}$$

\begin{ex}
{\rm For $d=4$, an example 
of an orbit of length $2$ is given by the couples 
$$((+,-,-,-,+),(2,2))~~~\, {\rm and}~~~\, ((+,+,-,+,+),(2,2)).$$
Here both SPs are symmetric w.r.t. its middle.

\medskip
For $d=5$, such an example is given by the couples 
$$((+,-,-,-,-,+),(2,3))~~~\, {\rm and}~~~\, ((+,+,-,+,-,-),(3,2)).$$
The first of the SPs is symmetric and the second one is anti-symmetric 
w.r.t. their middles. 
  
  \medskip
Finally, for $d=3$, the following four couples  (SP, AP): 
$$\begin{array}{lclc}
((+,+,+,-), (1,2));& 
((+,-,+,+), (2,1));\\ \\ 
((+,-,-,-), (1,2));&((+,+,-,+), (2,1)).\end{array}$$
constitute one orbit for $d=3$. In this example all admissible pairs are 
Descartes' pairs.}  
\end{ex}

\medskip
\subsection{Degrees $d\geq 4$}

It turns out that for $d\geq 4$, it is no longer true that all couples 
(SP, AP) are 
realizable by  polynomials of degree $d$. Namely, the following result 
can be found in~\cite{Gr}:

\begin{tm}\label{tmd=4}
The only  couples (SP, AP) which are non-realizable by 
univariate polynomials of degree $4$ are:

$$((+,-,-,-,+), (0,2))~~~\, {\rm and}~~~\, ((+,+,-,+,+), (2,0)).$$
\end{tm}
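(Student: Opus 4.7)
The plan is to combine the $\mathbb{Z}_2\times\mathbb{Z}_2$-symmetry with a single algebraic obstruction. First, I would note that the two listed couples lie in the same orbit: applying the first generator \eqref{eqfirstgenerator} to a polynomial with sign pattern $(+,-,-,-,+)$ yields a polynomial with sign pattern $(+,+,-,+,+)$, and it interchanges the entries of the admissible pair. By the observation that orbit members are simultaneously (non-)realizable, it suffices to rule out the couple $((+,-,-,-,+),(0,2))$.

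Second, for every other couple (SP, AP) in degree $4$ I would exhibit an explicit realizer. There are sixteen sign patterns beginning with $+$, and for each the set of admissible pairs is short; every such pair (apart from the two excluded ones) can be realized by a suitable product of linear factors $(x\pm c_i)$ and irreducible quadratics $(x^2+\alpha x+\beta)$, in the same spirit as the tables already displayed for $d\leq 3$. I would organize this case-by-case but not write out every construction.

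Third, for the genuine obstruction, suppose a polynomial $P$ realizes $((+,-,-,-,+),(0,2))$. Then $P$ factors as
\[
P(x) \;=\; (x^2 - 2px + p^2+q^2)(x+u)(x+v),
\]
with $u,v>0$, $q>0$ and $p\in\mathbb{R}$. Expanding, the coefficients of $x^3$ and $x$ in $P$ are $u+v-2p$ and $(p^2+q^2)(u+v)-2puv$, respectively, and the sign pattern requires both of them to be negative. This yields the two inequalities
\[
p \;>\; \frac{u+v}{2} \qquad \text{and} \qquad p^2+q^2 \;<\; \frac{2puv}{u+v}.
\]
Since $q>0$, the second inequality forces $p < \frac{2uv}{u+v}$, and the AM-HM inequality $\frac{2uv}{u+v}\leq\frac{u+v}{2}$ then contradicts the first.

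The main obstacle is spotting this short algebraic obstruction: one must notice that the sign conditions on the two ``non-extreme, non-middle'' coefficients $a_3$ and $a_1$ are alone sufficient, with AM-HM supplying the contradiction. It is worth remarking that the constraint coming from $a_2$ and the (automatic) positivity of the constant term play no role whatsoever in the argument, so the obstruction is quite rigid and immediately suggests where to look for analogous obstructions in higher degrees.
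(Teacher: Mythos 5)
Your argument is correct, and your obstruction is genuinely different from the one in the paper. The paper works with the mirror couple $((+,+,-,+,+),(2,0))$: if $P$ has two positive roots $\alpha<\beta$ and no negative ones, then for $u\in(\alpha,\beta)$ the even-degree monomials take equal values at $\pm u$ while the odd ones (with $a_3>0$, $a_1>0$) are positive at $u$ and negative at $-u$, so $P(-u)<P(u)<0$; together with $P(0)>0$ and $P(x)\to+\infty$ as $x\to-\infty$ this forces two negative roots, a contradiction. You instead rule out $((+,-,-,-,+),(0,2))$ by writing $P=(x^2-2px+p^2+q^2)(x+u)(x+v)$ and pitting the sign conditions on $a_3$ and $a_1$ against each other via AM--HM; I checked the coefficient computation and the chain $p<\tfrac{2uv}{u+v}\le\tfrac{u+v}{2}<p$, and it is sound (note $p>\tfrac{u+v}{2}>0$ makes the division by $p$ legitimate, and $q\neq 0$ gives the strict inequality $p^2<p^2+q^2$). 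Interestingly, both proofs isolate exactly the same data --- the signs of $a_3$ and $a_1$ --- but the paper's version needs no factorization and is the one that scales to the higher-degree series (it is essentially the comparison of $P(u)$ and $P(-u)$ that drives Proposition~\ref{prop:even} and the degree $5$--$8$ obstructions), whereas your factorization-plus-AM--HM computation is more self-contained but tied to having exactly one quadratic factor. Your reduction to a single couple via the generator \eqref{eqfirstgenerator} and your treatment of the realizable cases (explicit examples, not all written out) match the paper, which is equally terse on that point.
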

It is clear that these two cases constitute one orbit of the 
$\mathbb{Z}_2\times \mathbb{Z}_2$-action of length $2$ (the SPs are the same 
when read the usual way and backwards). 

\begin{proof}
The argument showing 
non-realizability in Theorem~\ref{tmd=4} is easy. Namely,   
if a polynomial 

$$P\, \, :=\, \, x^4~+~a_3x^3~+~a_2x^2~+~a_1x~+~a_0$$ 
realizes the second of these couples and has two 
positive roots $\alpha <\beta$ and no negative roots, then for any 
$u\in (\alpha ,\beta )$, the values of the monomials $x^4$, $a_2x^2$ and $a_0$ 
are the same at $u$ and $-u$ while the monomials $a_3x^3$ and $a_1x$ 
are positive at $u$ and negative at $-u$. Hence $P(-u)<P(u)<0$. As $P(0)>0$ 
and $\lim _{x\rightarrow -\infty}P(x)=+\infty$, the polynomial $P$ has two 
negative roots as well -- a contradiction. 

For $d=4$, realizability of all other couples (SP, AP) can be proved by 
producing explicit examples.\end{proof} 

\begin{rem}\label{remdiscr}
{\rm In \cite{KoSh} a geometric illustration of 
the non-realizability of the two cases mentioned in Theorem~\ref{tmd=4} is 
proposed. Namely, one considers the family of polynomials 
$Q:=x^4+x^3+ax^2+bx+c$ 
and the {\em discriminant set} 

$$\Delta \, \, :=\, \, \{ \, (a,b,c)\in \mathbb{R}^3\, |\, 
{\rm Res}(Q,Q')\, =\, 0\, \} \, \, ,$$
where Res\, $(Q,Q')$ is the resultant of the polynomials $Q$ and $Q'$. The 
hypersurface $\Delta =0$ partitions $\mathbb{R}^3$ into three open domains, 
in which the polynomial $Q$ has $0$, $1$ or $2$ complex conjugate 
pairs of roots respectively. These domains intersect the $8$ open orthants of 
$\mathbb{R}^3$ defined by the coordinate system $(a, b, c)$, 
and in each of these 
intersections the polynomial $Q$ has one and the same number of positive, 
negative and complex roots, as well as the same signs of its coefficients. 
The non-realizability of the couple 
$((+,+,-,+,+), (2,0))$ can be interpreted as the fact that the corresponding 
intersection is empty. Pictures of discriminant sets allow to construct 
easily the numerical examples mentioned in the proof of Theorem~\ref{tmd=4}.

It remains to notice that for $\alpha >0$, $\beta >0$, 
the polynomuials $P(x)$ and 
$\beta P(\alpha x)$ have one and the same numbers of positive, negative 
and complex roots. Therefore it suffices to consider the family of polynomials 
$Q$ in order to cover all SPs beginning with $(+,+)$. The ones beginning with 
$(+,-)$ will be covered by the family $Q(-x)$.}
\end{rem}

\medskip
For degrees $d=5$ and $6$, the following result can be found in \cite{AlFu}. 

\begin{tm}\label{tmd=56}
{\rm (1)} The only two  couples (SP, AP) 
which are non-realizable 
by univariate polynomials of degree $5$ are:

$$((+,-,-,-,-,+), (0,3))~~~\, {\rm and}~~~\, ((+,+,-,+,-,-), (3,0)).$$

\noindent
{\rm (2)} For degree $d=6$, up to the above $\mathbb{Z}_2\times \mathbb{Z}_2$-action, 
the only non-realizable  couples (SP, AP) are:

$$\begin{array}{lclc}
((+,-,-,-,-,-,+), (0,2));&((+,-,-,-,-,-,+), (0,4));\\ \\ 
((+,-,+,-,-,-,+), (0,2));&((+,+,-,-,-,-,+), (0,4)).
\end{array}$$ 
\end{tm}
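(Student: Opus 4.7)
My plan is to handle the listed orbits in both parts by mimicking, at the coefficient level, the sign-based argument of Theorem~\ref{tmd=4}: I express a hypothetical realizer as the product of its real linear factors with one or several positive-definite quadratics, translate the prescribed sign pattern into a system of polynomial inequalities among the elementary symmetric functions of the real roots and the parameters $(p_i,q_i)$ of the quadratic factors, and then extract a contradiction by combining these inequalities with AM-GM or Newton/Maclaurin-type bounds and the discriminant conditions $p_i^2<4q_i$.

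By the $\mathbb{Z}_2\times\mathbb{Z}_2$-action~(\ref{eqfirstgenerator})--(\ref{eqsecondgenerator}), the two couples in part~(1) form one orbit, and it suffices to rule out $((+,+,-,+,-,-),(3,0))$. Writing
$$
P(x)\, =\, (x-\alpha)(x-\beta)(x-\gamma)(x^2+px+q)
$$
with $\alpha,\beta,\gamma>0$ and $p^2<4q$, expansion yields
$$
a_4=p-e_1,\ \ a_3=q-e_1p+e_2,\ \ a_2=e_2p-e_1q-e_3,\ \ a_1=e_2q-e_3p,\ \ a_0=-e_3q,
$$
where $e_1,e_2,e_3$ are the elementary symmetric functions of $\alpha,\beta,\gamma$. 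From $a_4>0$ and $a_1<0$ one has $p>e_1$ and $q<pe_3/e_2$; combined with $q>p^2/4$ this forces $p<4e_3/e_2$, hence $e_1e_2<4e_3$. But AM-GM applied twice gives $e_1\geq 3e_3^{1/3}$ and $e_2\geq 3e_3^{2/3}$, so $e_1e_2\geq 9e_3$, a contradiction.

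For part~(2), the same recipe is the starting point in each of the four orbits. The two orbits with AP $(0,4)$ involve only a single quadratic factor, and the argument reduces to one chain of inequalities: for $((+,-,-,-,-,-,+),(0,4))$ one writes $P=\prod_{i=1}^4(x+\gamma_i)\cdot(x^2+px+q)$ and obtains $|p|>e_1$ (from $a_5<0$) and $|p|<4e_4/e_3$ (from $a_1<0$ together with the discriminant), contradicting the AM-GM bound $e_1e_3\geq 16e_4$; the orbit with SP $(+,+,-,-,-,-,+)$ requires the slightly finer bounds $|p|>e_2/e_1$ (from $a_4<0$ and $q>0$) and $|p|<4e_4/e_3$, combined with Newton's inequalities $e_2^2\geq(9/4)e_1e_3$ and $e_3^2\geq(8/3)e_2e_4$ which multiply to give $e_2e_3\geq 6e_1e_4$, again a contradiction.

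The principal obstacle is the two orbits with AP $(0,2)$: here the realizer carries only two real (negative) roots but \emph{two} positive-definite quadratic factors $(x^2+p_ix+q_i)$, so the inequality system gains four extra parameters and two discriminant conditions, and the ``one upper bound versus one lower bound on a single $|p|$'' strategy no longer closes the argument. One must combine several of the sign constraints on $a_1,\ldots,a_5$ simultaneously, possibly after splitting into subcases according to the sign of $p_1+p_2$ or the relative sizes of $q_1,q_2$. Finally, for every (SP,\,AP) couple in degrees~5 and~6 outside the listed orbits, realizability is established by exhibiting an explicit polynomial: a product of real linear factors $(x\mp r)$ supplying the prescribed real roots together with quadratics $(x-u)^2+v$ ($v>0$) supplying the complex pairs, with parameters tuned by hand so that the expanded coefficients have the required signs.
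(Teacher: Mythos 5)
The paper offers no proof of Theorem~\ref{tmd=56} at all --- it is quoted from \cite{AlFu} --- so your attempt stands or falls on its own merits. The pieces you actually carry out are correct. For $((+,+,-,+,-,-),(3,0))$ the squeeze $e_1<p<4e_3/e_2$, obtained from $a_4>0$, $a_1<0$ and $p^2<4q$, does contradict the AM--GM bound $e_1e_2\ge 9e_3$; and the two degree-$6$ couples with AP $(0,4)$ are likewise disposed of correctly (the Newton bounds $e_2^2\ge\tfrac{9}{4}e_1e_3$ and $e_3^2\ge\tfrac{8}{3}e_2e_4$ check out, and the sign of $p$ needed to pass from $a_1<0$ to $q<e_4|p|/e_3$ is indeed secured by $a_5<0$, resp.\ by $a_4<0$ together with $q,e_2>0$). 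This is a self-contained elementary route in the spirit of the sign-comparison proof of Theorem~\ref{tmd=4}, and for these three orbits it is complete.

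The gap is exactly where you flag it: the couples $((+,-,-,-,-,-,+),(0,2))$ and $((+,-,+,-,-,-,+),(0,2))$ --- half of the content of part (2) --- are not proved non-realizable. With two positive-definite quadratic factors $(x^2+p_ix+q_i)$ there is no longer a single parameter caught between one lower and one upper bound, and saying that one ``must combine several of the sign constraints, possibly after splitting into subcases'' describes the obstacle without overcoming it. The conclusion is delicate precisely here: the pair $(0,2)$ is non-realizable for the sign patterns $(+,-,-,-,-,-,+)$ and $(+,-,+,-,-,-,+)$ but realizable for $(+,+,-,-,-,-,+)$ (where only $(0,4)$ is excluded), so any correct argument must use the exact sign data in an essential way, and nothing in your sketch shows that the corresponding inequality systems are actually inconsistent. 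The realizability half --- explicit polynomials for every remaining admissible couple in degrees $5$ and $6$ --- is left as a finite hand verification, which matches the level of detail the paper itself tolerates for $d=4$; the two missing non-realizability arguments are the substantive deficit.
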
      
The two cases of Part (1) of Theorem~\ref{tmd=56} also form an orbit of the 
$\mathbb{Z}_2\times \mathbb{Z}_2$-action of length $2$. Each of the first two 
cases of Part (2) defines an orbit of length $2$ while each of the last two 
cases defines an orbit of length~$4$.

\medskip
For $d=7$, the following theorem is contained in \cite{FoKoSh}:

\begin{tm}
For univariate polynomials of degree $7$, among their 1472 possible couples (SP, AP) (up to the 
$\mathbb{Z}_2\times \mathbb{Z}_2$-action) exactly the following $6$ 
are non-realizable: 

$$\begin{array}{lclc}
((+,+,-,-,-,-,-,+), (0,5));&((+,+,-,-,-,-,+,+), (0,5));\\ \\ 
((+,-,-,-,-,+,-,+), (0,3));&((+,+,+,-,-,-,-,+), (0,5));\\ \\ 
((+,-,-,-,-,-,-,+), (0,3));&((+,-,-,-,-,-,-,+), (0,5)).\end{array}$$
\end{tm}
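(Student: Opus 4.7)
The strategy is a comprehensive case analysis: after reducing by the $\mathbb{Z}_2 \times \mathbb{Z}_2$-action of (\ref{eqfirstgenerator})-(\ref{eqsecondgenerator}), one inspects each of the $1472$ orbits of $(SP, AP)$-couples in degree $7$. The task splits in two: exhibit an explicit realization for every orbit not on the forbidden list, and prove non-realizability for each of the six listed cases.

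For realizability I would proceed incrementally from the degree $6$ classification of Theorem~\ref{tmd=56}. Multiplying a degree $6$ polynomial that realizes some $(SP', AP')$ by a linear factor $(x \mp \gamma)$ with $\gamma > 0$ adds one positive or one negative root, and multiplying by an irreducible real quadratic $x^2 + \mu x + \nu$ adds one complex conjugate pair. By varying the parameters $\gamma, \mu, \nu$ across large/small regimes and perturbing to keep all coefficients non-zero, one covers most target couples. The residual cases can be treated by concatenation-type arguments that glue two lower-degree polynomials with separately prescribed root counts, and by a Rolle-type antiderivative scheme: if $P'$ realizes a given triple of positive, negative, and complex-pair counts in degree $6$ with the appropriate leading sign pattern, an antiderivative $P$ with carefully chosen constant term realizes the corresponding degree $7$ triple.

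Non-realizability calls for more delicate obstructions, modelled on the degree $4$ proof of Theorem~\ref{tmd=4}. That argument splits $P(x) = E(x^2) + x\, O(x^2)$ into its even and odd parts and compares $P(u)$ with $P(-u)$ on an interval between two hypothetical positive roots to force a contradiction with the prescribed signs. In degree $7$ the same parity idea, combined with iterated Rolle's theorem (applied to $P, P', P''$, whose sign patterns are obtained from that of $P$ by successive truncation of the trailing sign, and whose admissible root distributions are already constrained by Theorems~\ref{tmd=4} and \ref{tmd=56} in the lower degrees), should rule out each of the listed couples. The symmetry $P \leftrightarrow P^R$ of (\ref{eqsecondgenerator}) is applied repeatedly to transfer any obstruction at $x = 0$ into one at $x = \infty$ and vice versa, halving the effective casework.

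The main obstacle is the absence of a uniform non-realizability principle: each of the six exceptional couples seems to demand a separately tailored argument, typically hinging on the precise positions of the isolated sign changes in the SP and on the fact that the prescribed positive and negative root counts saturate (or nearly saturate) the Descartes bounds for $P$ and for $P^R$. Moreover, verifying realizability for the vast bulk of remaining orbits is a heavy bookkeeping task, probably most cleanly handled by a systematic parametric construction scheme backed by computer-assisted verification of explicit witnesses, so that the mathematical content reduces to the six obstruction arguments and the combinatorial burden is offloaded to a machine.
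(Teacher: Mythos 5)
The paper itself gives no proof of this theorem --- it is quoted from \cite{FoKoSh} --- so there is no internal argument to compare against; judged on its own, your text is a research plan rather than a proof, and the plan has a concrete failure point. None of the six non-realizability arguments is actually carried out, and the two mechanisms you propose are demonstrably insufficient for at least some of the cases. Take $((+,-,-,-,-,-,-,+),(0,3))$: the derivative's sign pattern is $(+,-,-,-,-,-,-)$ with Descartes' pair $(1,5)$, and the Rolle constraints (\ref{Rolle1})--(\ref{Rolle2}) only force $(pos_1,neg_1)\in\{(1,3),(1,5)\}$, both of which \emph{are} realizable in degree $6$ by Theorem~\ref{tmd=56} (the non-realizable degree-$6$ couples all have AP of the form $(0,\nu)$). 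So ``iterated Rolle plus the lower-degree classification'' produces no contradiction here. Likewise the degree-$4$ parity comparison of $P(u)$ with $P(-u)$ does not close these cases as stated: with three roots of one sign and an AP component equal to $0$ rather than $2$, the sign bookkeeping on the even and odd parts no longer yields the clean chain of inequalities $P(-u)<P(u)<0$. In the literature the four couples with AP $(0,5)$ follow from a quantitative criterion of the type (\ref{eq:kappa}) in Proposition~\ref{prop:3parts}(i) (e.g.\ for $(+,+,-,-,-,-,+,+)$ one gets $\kappa=4$, for $(+,-,-,-,-,-,-,+)$ one gets $\kappa=25$), while the two couples with AP $(0,3)$ need separate ad hoc arguments; your sketch does not identify or supply either ingredient.

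On the realizability side, the burden of roughly $1466$ remaining orbits is deferred entirely to ``computer-assisted verification'' without exhibiting a single witness or a construction scheme precise enough to be checked. Note also that simply multiplying a degree-$6$ realizing polynomial by $(x\mp\gamma)$ or by an irreducible quadratic does not control the sign pattern of the product; you need the $\varepsilon$-scaling of the concatenation Lemma~\ref{lm:conc} (or an equivalent degeneration argument) to guarantee that the resulting SP is the prescribed one. As written, the proposal correctly identifies the general toolbox used in this area but proves neither half of the statement.
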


The lengths of the respective orbits in these $6$ cases are 
$4$, $2$, $4$, $4$, $2$ and~$2$. 

\medskip
The case $d=8$ has been partially solved in \cite{FoKoSh} and completely 
in~\cite{KoCMJ}:

\begin{tm}
For degree $d=8$, among the 3648 possible couples (SP, AP) (up to the 
$\mathbb{Z}_2\times \mathbb{Z}_2$-action) exactly the following $19$ 
are non-realizable:

$$\begin{array}{lclc}
((+,+,-,-,-,-,-,+,+), (0,6));&((+,+,-,-,-,-,-,-,+), (0,6));\\ \\ 
((+,+,+,-,-,-,-,-,+), (0,6));&((+,+,+,+,-,-,-,-,+), (0,6));\\ \\ 
((+,-,+,-,-,-,+,-,+), (0,2));&((+,-,+,-,+,-,-,-,+), (0,2));\\ \\ 
((+,-,+,-,-,-,-,-,+), (0,2));&((+,-,+,-,-,-,-,-,+), (0,4));\\ \\ 
((+,-,-,-,+,-,-,-,+), (0,2);&((+,-,-,-,+,-,-,-,+), (0,4));\\ \\ 
((+,-,-,-,-,-,-,-,+), (0,2));&((+,-,-,-,-,-,-,-,+), (0,4);\\ \\ 
((+,-,-,-,-,-,-,-,+), (0,6));&((+,+,+,-,-,-,-,+,+), (0,6));\\ \\ 
((+,-,-,-,-,+,-,-,+), (0,4));&((+,-,-,-,-,-,-,+,+), (0,4));\\ \\ 
((+,-,+,+,-,-,-,-,+), (0,4));&((+,-,+,-,-,-,-,+,+), (0,4));\\ \\ 
((+,-,-,-,-,+,-,+,+), (0,4)).&&
\end{array}$$
\end{tm}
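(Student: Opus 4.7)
The plan is to establish two complementary claims: (i) each of the $19$ listed couples (SP, AP) is non-realizable, and (ii) every other couple among the $3648$ possibilities (up to the $\mathbb{Z}_2\times\mathbb{Z}_2$-action) is realizable. I would begin by invoking the action from \S\ref{sec2} to fix canonical representatives, which both cuts the case list by a factor of up to $4$ and normalizes the leading signs of each SP. I would then partition the $19$ non-realizable couples into families according to the shape of the SP---long blocks of $-$'s, isolated $+$'s inside a negative region, or near-alternating patterns---since the arguments naturally cluster this way.

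For non-realizability, the primary tool is the parity analysis already used for Theorem~\ref{tmd=4}: if $P$ has too few real roots of one sign, then comparing $P(u)$ with $P(-u)$ for $u$ in a putative root-containing interval forces a contradiction with the sign of the leading term or of $P(0)$. When this direct comparison fails, I would pass to the derivative: by Rolle's theorem, a polynomial $P$ realizing $(0,neg)$ forces $P'$ to realize some couple $(pos',neg')$ with $neg' \ge neg-1$, and the SP of $P'$ is obtained by a deterministic rule from that of $P$. Using Theorems~\ref{tmd=4} and \ref{tmd=56} together with the $d=7$ classification, several of the $19$ cases reduce to derivative couples that are already known to be non-realizable; the remainder require iterating the passage to $P''$ or combining it with the reciprocal transformation \eqref{eqsecondgenerator}.

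For realizability I would produce explicit polynomials, using three basic constructions: (a) multiply a realizable polynomial of degree $7$ by a linear factor $(x \pm a)$ with $a>0$, which augments the AP by $(1,0)$ or $(0,1)$ and prepends a controlled sign to the SP; (b) multiply by an irreducible real quadratic $x^2+bx+c$ to add a complex conjugate pair while adjusting the SP in a prescribed way; and (c) perturb a polynomial with a multiple real root to split it into a nearby pair of real or complex conjugate roots, thereby moving across discriminantal walls as in Remark~\ref{remdiscr}. Organized as a tree search over (SP, AP) pairs with symbolic verification of coefficient signs, this should cover the vast majority of the $3629$ realizable cases; only a handful of boundary cases need ad hoc input.

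The hardest step is unquestionably the non-realizability direction for ``alternating-looking'' SPs such as $(+,-,+,-,-,-,+,-,+)$ or $(+,-,+,-,+,-,-,-,+)$, where neither the direct parity argument nor a single pass to $P'$ rules out the candidate AP: the derived SP admits the relevant $(pos',neg')$. For these couples the obstruction must come from finer joint information about $P$, $P'$, and the signs of the critical values, or equivalently from the discriminantal stratification of Remark~\ref{remdiscr} in more parameters. Following \cite{KoCMJ}, I expect each such case to reduce to showing that a specific stratum of the discriminant variety is disjoint from a specified orthant of coefficient space. Making this exhaustive and uniform across all $19$ non-realizable couples is the principal obstacle, and is precisely what makes the $d=8$ classification substantially harder than its predecessors.
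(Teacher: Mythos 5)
Your proposal is a strategy outline rather than a proof: the decisive steps are stated as expectations, not arguments. (For what it is worth, the survey itself does not prove this theorem either --- it is quoted from \cite{FoKoSh} and \cite{KoCMJ}, the only methodological hint being that discriminant sets in the sense of Remark~\ref{remdiscr} were used --- so your text has to be judged on what it actually establishes.) The first concrete gap is in the non-realizability direction. The parity argument of Theorem~\ref{tmd=4} and the passage to $P'$ both fail for most of the $19$ couples. For instance, for $((+,-,-,-,-,-,-,-,+),(0,6))$ the derivative has sign pattern $(+,-,-,-,-,-,-,-)$ with Descartes' pair $(1,6)$; Rolle's theorem plus the parity and sign-of-constant-term constraints force $P'$ to realize exactly the pair $(1,6)$, which is the Descartes' pair of that SP and hence realizable by Proposition~\ref{prophyp}. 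So the reduction to degree $7$ yields no contradiction, and the same happens for essentially all the listed $(0,\nu)$ couples. Your fallback --- ``I expect each such case to reduce to showing that a specific stratum of the discriminant variety is disjoint from a specified orthant'' --- is precisely the content that needs to be proved, and nothing in your text supplies it. Some, but not all, of the listed cases do follow from the quantitative criterion of Proposition~\ref{prop:3parts}(i): the first four $(0,6)$ entries have $\kappa\geq 4$, but for example $((+,+,+,-,-,-,-,+,+),(0,6))$ gives $\kappa=\frac{4}{3}\cdot\frac{5}{2}=\frac{10}{3}<4$ and escapes that criterion entirely.

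The second gap is the realizability direction, which concerns $3629$ cases and for which you offer a ``tree search with symbolic verification'' plus the admission that a handful of boundary cases need ad hoc input. Concatenation (Lemma~\ref{lm:conc}) only produces APs that are componentwise sums of the APs of the factors, and multiplying by a real quadratic without real roots does not let you prescribe the resulting sign pattern freely (see Example~\ref{ex:imp} for how constrained the outcome is); so it is not clear that your constructions (a)--(c) reach every realizable couple, and the completeness of the exception list --- that it is exactly these $19$ --- is the entire assertion of the theorem. In summary, you have correctly identified the toolbox and correctly isolated where the difficulty sits, but neither inclusion is actually proved.
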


The lengths of the respective orbits are: $2$, $4$, $4$, $4$, $2$, $4$, $4$, 
$4$, $2$, $2$, $2$, $2$, $2$, $4$, $4$, $4$, $4$, $4$ and~$4$.

\begin{rem}
{\rm As we see above,  for 
$d=4$, $5$, $6$, $7$ and $8$,  up to the 
$\mathbb{Z}_2\times \mathbb{Z}_2$-action, the numbers of non-realizable 
cases are 
 $1$, $1$, $4$, $6$ and $19$ respectively. The fact that these numbers increase 
more when $d=5$ and $d=7$ than when $d=4$ and $d=6$ could be related to the 
fact that the maximal possible number of complex conjugate pairs of roots of 
a real univariate degree $d$ polynomial is $[d/2]$.
  This number increases w.r.t. $[(d-1)/2]$ 
 when $d$ is even and does not increase  when $d$ is odd.}
\end{rem}  

Observe that for $d\leq 8$,  all examples of couples (SP, AP) which 
are non-realizable are 
with APs of the form $(\nu ,0)$ or $(0,\nu )$, $\nu \in \mathbb{N}$. 
Initially we thought that this is always the case. However recently 
it was proven that, for higher degrees, this fact is no longer true, 
see~\cite{KoMB}:

\begin{tm}
For $d=11$, the following couple (SP, AP) 

$$((+,-,-,-,-,-,+,+,+,+,+,-), (1,8))$$
is non-realizable.  The Descartes' pair in this case equals $(3,8)$. 
\end{tm}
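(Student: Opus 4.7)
The plan is to proceed by contradiction, propagating the known non-realizability at degree $8$ through repeated differentiations. Assume $P$ is a degree-$11$ polynomial realizing the couple; then $P$ has a simple positive root $\alpha$, eight simple negative roots $-\beta_1,\dots,-\beta_8$ with $\beta_i>0$, and one simple complex conjugate pair. For each $k\ge 0$, the sign pattern of $P^{(k)}$ is the tail $(s_{11},s_{10},\dots,s_k)$ of $\sigma$; for $k=1,2,3$ this tail has exactly two sign changes and, respectively, $8$, $7$ and $6$ sign preservations.

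Let $n_k$ denote the number of negative real roots of $P^{(k)}$. Starting from $n_0=8$, Rolle's theorem yields $n_k\ge 8-k$, and applying Descartes' rule to $P^{(k)}(-x)$ forces $n_k$ to lie in the parity class determined by the number of sign preservations of the tail of $\sigma$. The two constraints together give $n_1=8$, $n_2=7$ and $n_3=6$. Consequently $P^{(3)}$ is a degree-$8$ polynomial with sign pattern $\sigma_3:=(+,-,-,-,-,-,+,+,+)$ and admissible pair either $(0,6)$ or $(2,6)$. The option $(0,6)$ is excluded because $(\sigma_3,(0,6))$ is $\mathbb{Z}_2\times\mathbb{Z}_2$-equivalent, via the second generator \eqref{eqsecondgenerator}, to $((+,+,+,-,-,-,-,-,+),(0,6))$, which appears in the list of non-realizable couples of the $d=8$ theorem above. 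Hence $P^{(3)}$ must realize $(\sigma_3,(2,6))$, and in particular all eight of its roots are real.

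The remaining step, and the main obstacle, is to turn this rigid structural information about $P^{(3)}$ into an incompatibility with the prescribed SP and AP of $P$. The concrete plan is to factor $P=(x-\alpha)Q$, where $Q$ has degree $10$ with eight negative real roots and one complex conjugate pair, and to use the synthetic-division identities $a_j=q_{j-1}-\alpha q_j$ (with $q_{10}=1$ and $q_{-1}=0$) to translate the three sign changes of $\sigma$ at the positions $(0,1)$, $(5,6)$ and $(10,11)$ into the system
\[
\frac{q_0}{q_1},\dots,\frac{q_4}{q_5}\ >\ \alpha\ >\ \frac{q_5}{q_6},\dots,\frac{q_9}{q_{10}},
\]
in the generic subcase where all $q_j>0$ (complex pair with non-negative real part). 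The hard part is to show that this sharp drop of the consecutive coefficient ratios of $Q$ at the single middle index $(5,6)$ cannot coexist with the Newton-type log-concavity inequalities enforced by the all-real-rootedness of $P^{(3)}$; concretely, one writes $Q(x)=R(x)(x^2+ux+v)$ with $R(x)=\prod_{i=1}^8(x+\beta_i)$ strictly satisfying Newton's inequalities, tracks how multiplication by the quadratic perturbs these inequalities, and verifies that the required gap between the upper and lower ratios cannot survive. The complementary subcase, in which some $q_j$ are negative (complex pair with sufficiently negative real part), is handled by a parallel sign-tracking analysis.
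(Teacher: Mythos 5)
Your opening reduction is correct and genuinely useful: counting sign changes and preservations in the truncated sign patterns of $P'$, $P''$, $P'''$, combining Rolle's theorem ($n_{k+1}\geq n_k-1$ starting from $n_0=8$) with the parity-and-bound constraint from Descartes' rule applied to $P^{(k)}(-x)$, you correctly pin down $n_1=8$, $n_2=7$, $n_3=6$; and the exclusion of the pair $(0,6)$ for $P^{(3)}$ via the reversal generator and the degree-$8$ classification (the couple $((+,+,+,-,-,-,-,-,+),(0,6))$ is indeed on that list) is legitimate. So you have proved that any realizing $P$ would have a hyperbolic third derivative with admissible pair $(2,6)$. The problem is that the proof stops there. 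Your last paragraph is not an argument but a description of an argument you hope exists: you never derive a contradiction between the ratio inequalities $q_0/q_1,\dots,q_4/q_5>\alpha>q_5/q_6,\dots,q_9/q_{10}$ and the hyperbolicity of $P^{(3)}$. This is precisely where all the difficulty of the theorem lives, and there is a real risk the plan fails as stated: Newton's inequalities are only necessary, not sufficient, for real-rootedness, so ``the Newton-type log-concavity inequalities enforced by the all-real-rootedness of $P^{(3)}$'' may simply be too weak to exclude the configuration. You also have not verified that the subcase with some $q_j<0$ (complex pair with negative real part) can be dispatched, nor how the constraints on the coefficients $a_3,\dots,a_{11}$ of $P^{(3)}$ interact with the factorization $P=(x-\alpha)Q$, whose coefficients mix in $a_0,a_1,a_2$ as well.

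For comparison: the survey itself gives no proof of this theorem; it refers to \cite{KoMB} and indicates that the argument there, as for the degree-$8$ classification, proceeds by a study of discriminant sets (see Remark~\ref{remdiscr}), i.e., by analyzing how the strata of polynomials with a prescribed number of complex conjugate pairs meet the relevant orthant in coefficient space. That is a substantially more computational route than the derivative-propagation scheme you set up, and the fact that the authors resorted to it is itself evidence that no short argument of the kind you sketch in your final paragraph is known. As it stands, your submission is a correct partial reduction followed by an unproven conjecture, not a proof.
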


\medskip
There is a strong evidence 
that for $d=9$, the similar couple (SP, AP)    

$$((+,-,-,-,-,+,+,+,+,-)~~~\, ,~~~\, (1,6))$$
is also non-realizable. (Its Descartes' pair equals $(3,6)$.)  
If this were true, then $9$ would be the 
smallest degree with an example of a non-realizable couple (SP, AP) 
for which both components of the AP are nonzero. When studying the cases 
$d=8$ and $d=11$ (see~\cite{KoCMJ} and \cite{KoMB}) discriminant sets have 
been considered, see Remark~\ref{remdiscr}.

\medskip
Summarizing the above, we have to admit that  the  information 
in low degrees available at the moment does not allow us 
to formulate a consistent conjecture describing all non-realizable 
couples in  an arbitrary degree which we could consider as sufficiently 
well-motivated. 

\section{Series of examples of (non-)realizable 
couples (SP, AP)}\label{sec3}

In this section we present  series of couples (non-)realizable for infinitely 
many  degrees.  We decided to include those proofs of the statements formulated below which are short 
and instructive. 

\subsection{Some examples of realizability and a concatenation lemma}

Our first examples of realizability deal with polynomials with the minimal 
possible number of real roots:

\begin{prop}
For $d$ even, any SP whose last component is a $+$ (resp. is a $-$) 
is realizable with the 
AP $(0,0)$ (resp. $(1,1)$). For $d$ odd, any SP whose last component 
is a $+$ (resp. is a $-$) 
is realizable with the AP $(0,1)$ (resp. $(1,0)$).
\end{prop}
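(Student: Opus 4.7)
The strategy is to start from the binomial $P_0(x) := x^d + \varepsilon_0$, where $\varepsilon_0 \in \{+1,-1\}$ matches the prescribed last component of the sign pattern, and then to add all intermediate monomials with very small coefficients of the prescribed signs. A direct inspection of $P_0$ handles the four cases of the proposition: for $d$ even and $\varepsilon_0 = +1$ one has $P_0 \geq 1$, so AP $= (0,0)$; for $d$ even and $\varepsilon_0 = -1$ the real roots are exactly the simple roots $\pm 1$, so AP $= (1,1)$; for $d$ odd and $\varepsilon_0 = +1$ the polynomial is strictly increasing with unique real root $-1$, giving AP $= (0,1)$; for $d$ odd and $\varepsilon_0 = -1$ the unique real root is $+1$, giving AP $= (1,0)$.

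Writing the desired SP as $(+, s_{d-1}, \ldots, s_1, s_0)$ with $s_0 = \varepsilon_0$, the next step is to consider
$$
P_\varepsilon(x) \;:=\; x^d \,+\, \varepsilon \sum_{j=1}^{d-1} s_j\, x^j \,+\, s_0
$$
for $\varepsilon > 0$ small. The coefficient of $x^j$ with $1 \leq j \leq d-1$ equals $\varepsilon s_j$, hence it is nonzero with sign exactly $s_j$; combined with the leading $+1$ and the constant $s_0$, this ensures that $P_\varepsilon$ defines the prescribed SP for every $\varepsilon > 0$, independently of the eventual choice of $\varepsilon$.

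Finally I would verify that for $\varepsilon > 0$ sufficiently small the number of positive and the number of negative real roots of $P_\varepsilon$ coincide with those of $P_0$. On the half-lines where $P_0$ is strictly positive, uniform continuity of $P_\varepsilon - P_0$ on any fixed compact interval, together with the domination of $P_\varepsilon$ by its leading monomial for $|x|$ large, shows that $P_\varepsilon$ stays strictly positive there for small $\varepsilon$. In the cases where $P_0$ has the simple real root $\pm 1$, the implicit function theorem produces a nearby simple real root of $P_\varepsilon$ of the same sign. The only mildly delicate point, which I expect to be the main (but very mild) obstacle, is to rule out the appearance of ``parasitic'' extra real roots in this latter situation; this is handled by the same compactness estimate used above, since outside a neighbourhood of $\pm 1$ the polynomial $P_0$ is bounded away from zero.
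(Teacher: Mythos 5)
Your proof is correct and is in essence the paper's own argument seen through the rescaling $x\mapsto \lambda x$, $P\mapsto P(\lambda x)/\lambda^d$: the paper takes an arbitrary polynomial defining the given SP and makes its constant term huge in absolute value, which after this normalization (with $\lambda=|a_0|^{1/d}$) becomes exactly your family $x^d+s_0$ plus intermediate monomials with coefficients tending to $0$. The perturbation details you supply (uniform domination by $x^d$ for $|x|$ large, compactness away from the simple roots of $x^d+s_0$, persistence of simple roots) are the right ones and close the argument.
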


\begin{proof}
Indeed, for any given SP,  it suffices to choose any polynomial defining 
this SP 
and to  increase (resp. decrease)  its constant term sufficiently much 
if the latter is positive (resp. negative). The resulting polynomial 
will have the required number of real roots. 
\end{proof}

Our next example deals with {\em hyperbolic} polynomials, i.e., 
real polynomials with all real roots. Several topics concerning hyperbolic 
polynomials are developed in~\cite{KoPS}.

\begin{prop}\label{prophyp}
Any SP is realizable with its Descartes' pair.
\end{prop}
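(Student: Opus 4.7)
The plan is to proceed by induction on the degree $d$. The base case $d=1$ is already covered by the warm-up table in \S\ref{sec2}: the pattern $(+,+)$ is realized by $x+1$ with Descartes pair $(0,1)$, and $(+,-)$ by $x-1$ with pair $(1,0)$. For the inductive step, suppose the claim holds for all degrees less than $d$, and let $\sigma=(+,s_{d-1},\ldots,s_1,s_0)$ be a sign pattern of length $d+1$ with Descartes pair $(c,p)$. Let $\sigma':=(+,s_{d-1},\ldots,s_1)$ be the truncated pattern of length $d$. Its Descartes pair is $(c,p-1)$ when $s_0=s_1$ and $(c-1,p)$ when $s_0\neq s_1$, because removing the last coefficient only affects the sign change / preservation recorded between $s_1$ and $s_0$.

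By the inductive hypothesis, $\sigma'$ is realized by a hyperbolic polynomial $Q(x)=\sum_{j=0}^{d-1}b_jx^j$ with exactly the required numbers of positive and negative roots and with $\mathrm{sgn}(b_j)$ prescribed by $\sigma'$. I now multiply $Q$ by a carefully chosen real linear factor: take $P(x):=(x+\eps)Q(x)$ if $s_0=s_1$, and $P(x):=(x-\eps)Q(x)$ if $s_0\neq s_1$, with $\eps>0$ to be chosen small. Writing $P(x)=\sum_{j=0}^d c_j x^j$, one computes $c_j=b_{j-1}\pm\eps\,b_j$ for $1\le j\le d-1$ (with $b_{-1}=b_d=0$), $c_d=b_{d-1}=+$, and $c_0=\pm\eps\,b_0$. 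Since every $b_{j-1}$ with $1\le j\le d$ is nonzero, for all sufficiently small $\eps>0$ the sign of $c_j$ equals that of $b_{j-1}$. Hence the sign sequence of $P$ from top to bottom is $s_{d-1}^Q,s_{d-2}^Q,\ldots,s_0^Q$ followed by the sign of $\pm b_0$, i.e.\ the pattern $\sigma'$ with its last entry either duplicated (for the $+\eps$ case) or flipped (for the $-\eps$ case). In either case this matches $\sigma$, by the choice of sign depending on whether $s_0=s_1$ or $s_0\neq s_1$.

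Finally, $P$ is still hyperbolic, and it has one more root than $Q$ — a negative root $-\eps$ in the first case and a positive root $\eps$ in the second — so its numbers of positive and negative roots equal $(c,p)$. Thus $P$ realizes $(\sigma,(c,p))$, completing the induction. The only technical point requiring care is the choice of $\eps$: it must be small enough that, simultaneously for each $j$ with $1\le j\le d-1$, the perturbation $\pm\eps\,b_j$ does not overcome $b_{j-1}$ in absolute value. Since there are only finitely many coefficients and each $b_{j-1}\ne 0$, such an $\eps$ exists, so this is a routine quantitative matter rather than a genuine obstacle. The essential content of the argument is the combinatorial observation that appending a linear factor $(x\pm\eps)$ with small $\eps$ acts on the sign pattern by either repeating or negating its last entry, exactly matching the two possibilities that arise when one deletes the final coefficient from a pattern.
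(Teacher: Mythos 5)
Your proof is correct and follows essentially the same route as the paper: the paper also inducts on $d$, realizing the truncated sign pattern hyperbolically with its Descartes' pair and then appending a small root via the concatenation lemma (Lemma~\ref{lm:conc}) with $P_2=x\pm 1$, which after the rescaling $\varepsilon^{d_2}P_1(x)P_2(x/\varepsilon)$ is exactly your multiplication by $(x\pm\eps)$. The only difference is that you prove the needed linear special case of that lemma inline via the explicit coefficient computation $c_j=b_{j-1}\pm\eps b_j$, whereas the paper invokes the lemma as a black box.
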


 Proposition~\ref{prophyp} will follow from   the following 
{\em concatenation lemma} 
whose proof can be found in~\cite{FoKoSh}.

\begin{lm}\label{lm:conc}
Suppose that  monic polynomials $P_1$ and $P_2$, of degrees $d_1$ and $d_2$ 
resp., 
realize the SPs $(+,\hat{\sigma}_1)$ and $(+,\hat{\sigma}_2)$, where 
$\hat{\sigma}_j$ are the SPs defined by $P_j$ in which the first $+$ 
is deleted. Then

\smallskip
\noindent
{\rm (1)} if the last position of $\hat{\sigma}_1$ is a $+$, then for any 
$\varepsilon >0$ small enough, the polynomial 
$\varepsilon ^{d_2}P_1(x)P_2(x/\varepsilon )$ realizes 
the SP $(+,\hat{\sigma}_1,\hat{\sigma}_2)$ and 
the AP $(pos_1+pos_2,neg_1+neg_2)$;
 
 \smallskip
 \noindent
{\rm (2)} if the last position of $\hat{\sigma}_1$ is a $-$, then for any 
$\varepsilon >0$ small enough, the polynomial 
$\varepsilon ^{d_2}P_1(x)P_2(x/\varepsilon )$ realizes 
the SP $(+,\hat{\sigma}_1,-\hat{\sigma}_2)$ and 
the AP $(pos_1+pos_2,neg_1+neg_2).$ (Here $-\hat{\sigma}_2$ is the SP 
obtained from $\hat{\sigma}_2$ by changing each $+$ by a $-$ and vice versa.)
\end{lm}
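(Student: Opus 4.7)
The plan is to compute the coefficients of the perturbed product
$$Q(x)\, :=\, \varepsilon^{d_2}\, P_1(x)\, P_2(x/\varepsilon)$$
explicitly and show that, for $\varepsilon>0$ small enough, each coefficient is sign-determined by a single dominant monomial in $\varepsilon$ whose sign yields the prescribed SP. Writing $P_j(x)=\sum_k a_{j,k}x^k$ with $a_{j,d_j}=1$, a direct expansion gives
$$c_m\, :=\, [x^m]\,Q(x)\, =\, \sum_{\substack{k+l=m\\ 0\le k\le d_1,\, 0\le l\le d_2}} a_{1,k}\, a_{2,l}\, \varepsilon^{d_2-l}.$$
The total degree is $d_1+d_2$ and $c_{d_1+d_2}=1$, so $Q$ is monic.

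The next step is to split the range of $m$ into two. \emph{For $m\ge d_2$}, the smallest exponent of $\varepsilon$ appearing in $c_m$ is $0$, attained uniquely at $l=d_2$, $k=m-d_2$; the corresponding term is $a_{1,m-d_2}$, nonzero by hypothesis. Hence for $\varepsilon$ small, $\operatorname{sgn} c_m=\operatorname{sgn} a_{1,m-d_2}$, and ranging $m=d_1+d_2,d_1+d_2-1,\ldots,d_2$ reproduces the leading $d_1+1$ entries of $Q$ as $(+,\hat\sigma_1)$. \emph{For $m<d_2$}, the smallest exponent of $\varepsilon$ in $c_m$ is $d_2-m$, attained uniquely at $l=m$, $k=0$; the dominant term is $a_{1,0}\,a_{2,m}\,\varepsilon^{d_2-m}$, again nonzero. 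So $\operatorname{sgn} c_m=\operatorname{sgn}(a_{1,0})\cdot\operatorname{sgn}(a_{2,m})$. Since $\operatorname{sgn} a_{1,0}$ is by definition the last entry of $\hat\sigma_1$, the trailing $d_2$ entries of $Q$ reproduce $\hat\sigma_2$ in case (1) and $-\hat\sigma_2$ in case (2), exactly as claimed.

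For the root count, the factorization of $Q$ makes things transparent: its roots are those of $P_1$ together with $\{\varepsilon r : P_2(r)=0\}$, and since $\varepsilon>0$ the map $r\mapsto \varepsilon r$ preserves the sign of every real root and maps non-real roots to non-real roots. Hence $Q$ has exactly $pos_1+pos_2$ positive and $neg_1+neg_2$ negative roots, giving the announced AP.

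The only delicate point is uniformity in $\varepsilon$: a priori, the threshold on $\varepsilon$ below which the dominant term controls the sign of $c_m$ could depend on $m$. This is harmless because there are only finitely many coefficients, so one takes the minimum of the finitely many thresholds. I do not anticipate any other obstacle; the entire argument is a bookkeeping of orders of $\varepsilon$ once the expression for $c_m$ is written down.
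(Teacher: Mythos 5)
Your proof is correct. The paper itself does not reproduce a proof of this lemma (it defers to \cite{FoKoSh}), but your argument --- expanding the coefficients of $\varepsilon^{d_2}P_1(x)P_2(x/\varepsilon)$, isolating the unique dominant power of $\varepsilon$ in each (with the case split at $m=d_2$ explaining the sign flip $-\hat{\sigma}_2$ via the factor $\operatorname{sgn}a_{1,0}$), and reading the AP off the factorization, with uniformity in $\varepsilon$ secured by finiteness --- is precisely the standard scaling argument underlying the concatenation construction.
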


The concatenation lemma allows to deduce the realizability of couples (SP, AP) 
with higher 
values of $d$ from that of couples with smaller $d$ in which cases explicit 
constructions are usually easier to obtain. On the other hand, non-realizability of special cases cannot be 
concluded using this lemma.  

\begin{ex}\label{ex:imp} {\rm  Denote by $\tau$ the last entry of the SP 
$ \tsi_1$. 
We consider the cases 

$$\begin{array}{cccccccccc}
P_2(x)&=&x-1,&x+1,&x^2+2x+2,&x^2-2x+2&{\rm with}\\ \\  
(pos_2,neg_2)&=&(1,0),&(0,1),&(0,0) ,&(0,0)&{\rm resp.}\end{array}$$ 
When $\tau=+,$ then one has 
respectively 

$$\tsi_2\, \, =\, \, (-)\, \, ,\, \, (+)\, \, ,\, \, (+,+)\, \, ,\, \, (-,+)$$ 
and the SP of  
$\eps^{d_2}P_1(x)P_2(x/\eps)$ equals 

$$(+, \tsi_1,-)\, \, ,\, \, (+, \tsi_1,+)\, \, ,\, \,  
(+, \tsi_1,+,+)\, \, ,\, \, (+, \tsi_1,-,+)\, \, .$$ 
When $\tau=-,$ then one has respectively  

$$ \tsi_2\, \, =\, \, (+)\, \, ,\, \, (-)\, \, ,\, \, (-,-)\, \, ,\, \, (+,-)$$ 
and the SP of  
$\eps^{d_2}P_1(x)P_2(x/\eps)$ equals 

$$(+, \tsi_1,+)\, \, ,\, \, (+, \tsi_1,-)\, \, ,\, \,  
(+, \tsi_1,-,-)\, \, ,\, \, (+, \tsi_1,+,-)\, \, .$$  }  
\end{ex}

\begin{proof}[Proof of Proposition~\ref{prophyp}]
We will use induction on the degree $d$ of the polynomial. 
For $d=1$, the SP $(+,-)$ (resp. $(+,+)$) is realizable with the AP $(1,0)$ 
(resp. $(0,1)$) by the polynomial $x-1$ 
(resp. $x+1$). 

For $d=2$, we apply 
Lemma~\ref{lm:conc}. Set $P_1:=x+1$ and $P_2:=x-1$. Then for 
$\varepsilon >0$ small enough, the polynomials 

$$\begin{array}{cccccc}
\varepsilon P_1(x)P_2(x/\varepsilon )&=&(x+1)(x-\varepsilon )&=&
x^2+(1-\varepsilon )x-\varepsilon&{\rm and}\\ \\ 
\varepsilon P_2(x)P_1(x/\varepsilon )&=&(x-1)(x+\varepsilon )&=&
x^2+(-1+\varepsilon )x-\varepsilon& \end{array}$$
define the SPs $(+,+,-)$ and $(+,-,-)$ respectively 
and realize them with the AP $(1,1)$. In the same way one can concatenate 
$P_1$ (resp. $P_2$) with itself to realize the SP $(+,+,+)$ with the AP 
$(0,2)$ (resp. the SP $(+,-,+)$ with the AP $(2,0)$). These are all possible 
cases of monic hyperbolic degree $2$ polynomials 
with nonvanishing coefficients.   

For $d\geq 2$,  in order  
to realize a SP $\sigma$ with its Descartes' pair $(c,p)$, we represent 
$\sigma$ in the form $(\sigma ^{\dagger},u,v)$, where $u$ and $v$ are the last 
two components of $\sigma$ and $\sigma ^{\dagger}$ is the SP obtained from 
$\sigma$ by deleting $u$ and $v$. Then we choose $P_1$ to be a monic 
polynomial realizing the SP $(\sigma ^{\dagger},u)$ 

(i) with the AP $(c-1,p)$, and 
we set $P_2:=x-1$, if $u=-v$; 

(ii) with the AP $(c,p-1)$, and we set $P_2:=x+1$, if $u=v$.
\end{proof}

Our next result discusses (non-)realizability for polynomials with only 
two sign changes (see \cite{FoKoSh} and~\cite{FoKoSh1}). 

\begin{prop}\label{prop:3parts} Consider a sign pattern $\bsi$ with $2$ 
sign changes, consisting of $m$ consecutive pluses  
followed by $n$ consecutive minuses and then by $q$ consecutive pluses, 
where $m+n+q=d+1.$ Then 

\noindent 
{\rm (i)} for the pair $(0,d-2),$ this sign pattern is not realizable if 
\begin{equation}\label{eq:kappa}
\kappa:=\frac{d-m-1}{m}\cdot \frac{d-q-1}{q} \ge 4;
\end{equation}

\noindent 
{\rm (ii)} the sign pattern $\bsi$ is realizable with any pair of the form 
$(2,v)$, except in the case when $d$ and $m$ are even, $n=1$ 
(hence $q$ is even) and $v=0$. 
\end{prop}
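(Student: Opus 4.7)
The plan is to treat the two parts with distinct strategies.

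\textbf{Part (i).} I would argue by contradiction. Suppose $P$ realizes $\bsi$ with the pair $(0,d-2)$. The leading coefficient is positive and $P$ has no positive roots, so $P(x)>0$ for every $x\geq 0$. Split
\[
P(x)\,=\,A(x)\,-\,B(x)\,+\,C(x),
\]
where $A(x)=\sum_{j=d-m+1}^{d}a_j x^j$, $B(x)=\sum_{j=q}^{d-m}|a_j|x^j$ and $C(x)=\sum_{j=0}^{q-1}a_j x^j$ are the three monomial blocks, each with positive coefficients. Positivity of $P$ on $(0,\infty)$ yields
\[
A(x)\,+\,C(x)\,>\,B(x)\quad\text{for every } x>0.
\]
The heart of the proof is an AM--GM style estimate comparing the extremal monomials of $A$ and $C$ against the middle block $B$. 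Applying Young's inequality to the pair $\{a_d x^d,\,a_0\}$ with weight $t\in(0,1)$ chosen so that the resulting geometric mean has exponent $td$ lying inside $[q,d-m]$, one obtains a lower bound of the form $A(x)+C(x)\geq \lambda\,x^{td}$, which must dominate $B(x)$ at the optimizing point $x_0>0$. A short one-parameter optimization shows that the boundary exponents $d$, $d-m+1$, $d-m$, $q$, $q-1$, $0$ of the three blocks enter the bound exactly through the ratios $(d-m-1)/m$ and $(d-q-1)/q$, and that the AM--GM estimate becomes tight precisely at $\kappa=4$. For $\kappa\geq 4$ the required strict inequality $A+C>B$ therefore fails at $x_0$, producing a contradiction.

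\textbf{Part (ii), realizability.} For any admissible pair $(2,v)$ outside the excluded case, I would construct a realizer by induction on $d$ via the concatenation Lemma~\ref{lm:conc}. In low degree, every $\bsi$-shaped two-sign-change pattern can be realized with $(2,v)$ by an explicit product $(x-r_1)(x-r_2)\cdot H(x)$, where $H$ carries the remaining $v$ negative roots and complex pairs. One extends to larger $d$ by concatenating with $P_2=x+1$ (which adds a single negative root and a trailing $+$ or $-$ according to the last sign of $\hat{\sigma}_1$, as in Example~\ref{ex:imp}) or with a real-rootless quadratic (which extends the pattern by two trailing entries while preserving $(2,v)$). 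The flexibility in the last sign of $\hat{\sigma}_1$ lets the induction step reproduce any prescribed extension of $\bsi$, and the parity of $v$ relative to $d$ guarantees admissibility at every stage.

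\textbf{Part (ii), non-realizability of the exception.} Assume a polynomial $P$ realizes the excluded couple ($d$ even, $m$ even, $n=1$, $v=0$). Then $P$ has exactly two positive roots $\alpha<\beta$ and no negative roots. Since $P(0)>0$ and $P(x)\to+\infty$ as $x\to-\infty$ ($d$ even, leading coefficient positive), we have $P(-u)>0$ for every $u>0$. For each $u\in(\alpha,\beta)$ one has $P(u)<0$, hence
\[
P(u)-P(-u)\,<\,0\quad\text{for such } u.
\]
On the other hand,
\[
P(u)-P(-u)\,=\,2\sum_{j\text{ odd}}a_j\,u^j.
\]
Under the hypotheses $d$ even, $m$ even and $n=1$, the unique negative coefficient $a_{d-m}$ sits at the even index $d-m$, so every odd-index coefficient of $P$ is positive. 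The right-hand side above is therefore strictly positive for $u>0$, contradicting the previous inequality.

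The main obstacle lies in part~(i): isolating the correct weight $t$ in the AM--GM estimate and verifying that the sharp threshold is precisely $\kappa=4$ (rather than some other constant) requires care in the one-parameter optimization, since all $m+n+q$ monomials of $P$ a priori enter the inequality. Once this estimate is in place, part~(ii) reduces to concatenation bookkeeping for realizability and to the clean parity argument above for the excluded case.
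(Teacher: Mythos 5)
Your treatment of the exceptional case in part (ii) is correct: when $d$ and $m$ are even and $n=1$, the unique negative coefficient sits at the even index $d-m$, so $P(u)-P(-u)=2\sum_{j\ \mathrm{odd}}a_ju^j>0$ for all $u>0$, which is incompatible with $P(u)<0<P(-u)$ for $u$ between the two positive roots. This is precisely the symmetry argument the paper itself uses to prove Theorem~\ref{tmd=4}, of which this exceptional case is the natural generalization, so that piece stands.

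Part (i), however, has a fatal gap: your argument never uses the hypothesis that $P$ has $d-2$ negative roots. All you extract is the inequality $A(x)+C(x)>B(x)$ for $x>0$, which follows from the absence of positive roots alone, and that inequality yields no contradiction for any value of $\kappa$: only the signs of the coefficients are prescribed, so one may shrink the middle block $B$ at will while keeping the sign pattern, after which $A+C>B$ holds on all of $(0,\infty)$ with room to spare. Indeed this very sign pattern is always realizable with the pair $(0,0)$ (first Proposition of Section~3), and part (i) asserts non-realizability only for the extreme pair $(0,d-2)$; any correct proof must therefore exploit the $d-2$ negative roots, e.g.\ by writing $P=(x^2-ux+v)\prod_{i=1}^{d-2}(x+r_i)$ with $r_i>0$ and $u^2<4v$ and playing the sign conditions $a_{d-m}<0$, $a_q<0$ against Newton--Maclaurin type inequalities for the elementary symmetric functions of the $r_i$ --- that is where the ratios $(d-m-1)/m$, $(d-q-1)/q$ and the threshold $4$ (the discriminant condition on the complex quadratic factor) actually come from. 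Your AM--GM estimate on the two monomials $a_dx^d$ and $a_0$ sees none of this root structure and cannot produce these quantities. The realizability half of part (ii) is also too thin as written: an induction by concatenation that ``reproduces any prescribed extension'' would, as described, apply equally well to the excluded couple with $(2,0)$, $d,m$ even, $n=1$; a correct construction must visibly break down exactly there, and your sketch offers no mechanism for that failure.
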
 

Certain results about realizability are formulated in terms of the ratios 
between the quantities $pos$, $neg$ and $d$. 
The following proposition is proved in~\cite{FoKoSh}:

\begin{prop}\label{propratio}
For a given couple (SP, AP), if $\min (pos, neg)>[(d-4)/3]$, then this 
couple is realizable.
\end{prop}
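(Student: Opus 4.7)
I would prove Proposition~\ref{propratio} by induction on the degree~$d$, with the base cases $d\leq 3$ dispatched by the explicit table in Section~\ref{sec2}; there the hypothesis $\min(pos,neg)>\lfloor(d-4)/3\rfloor$ either reduces to $\min\geq 1$ or is vacuous, and every relevant couple is realized. For the inductive step I would apply the concatenation Lemma~\ref{lm:conc} to write a candidate realizer as $P=\varepsilon^{d_2}P_1(x)P_2(x/\varepsilon)$ with $P_2$ of degree $1$ or $2$, and invoke the inductive hypothesis for $P_1$.

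Denote the last three signs of $\sigma$ by $(\tau,u,v)$. If $\tau=v$, I would take $P_2$ to be a complex-root quadratic $x^2\pm 2x+2$ (with AP $(0,0)$); the case analysis in Example~\ref{ex:imp} shows that the two choices of middle coefficient allow the inserted sign $u$ to be either $+$ or $-$, so every such $\sigma$ decomposes as a concatenation. The reduced problem has degree $d-2$, the same AP $(pos,neg)$, and one fewer complex conjugate pair, and the hypothesis transfers because $\lfloor(d-4)/3\rfloor\geq\lfloor(d-6)/3\rfloor$. If $\tau\neq v$, I would strip off a linear factor $x\pm 1$: when $u=v$ take $P_2=x+1$ and decrease $neg$ by one, when $u\neq v$ take $P_2=x-1$ and decrease $pos$ by one (the choice dictated by the sign table of Example~\ref{ex:imp} so that the appended sign equals $v$). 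Assuming without loss of generality $pos\leq neg$, whenever this reduction decreases $neg$ and $pos<neg$, the new minimum remains $pos>\lfloor(d-4)/3\rfloor\geq\lfloor(d-5)/3\rfloor$, so the induction closes.

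The main obstacle is the boundary subcase $pos=neg=\lfloor(d-4)/3\rfloor+1$ together with a local sign pattern that forces a reduction strictly decreasing $\min(pos,neg)$. Because $\lfloor(d-4)/3\rfloor-\lfloor(d-5)/3\rfloor$ vanishes for several residues of $d$ modulo~$3$, a single linear reduction can destroy the inductive hypothesis in this corner. I would handle it by instead peeling off a mixed-real-root quadratic $(x-a)(x+b)$ with $a,b>0$, which has AP $(1,1)$, flips the final sign, and drops the degree by~$2$, so the hypothesis need only survive $d\mapsto d-2$ with $\min\mapsto\min-1$ — a genuinely weaker arithmetic demand. The residual exceptional configurations where even this two-step reduction fails (essentially those with $d\equiv 0\pmod 3$ at the extremal value of $\min$) constitute a finite list in each residue class and are to be absorbed into an enlarged base of the induction. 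The fact that precisely $\lfloor(d-4)/3\rfloor$ makes the relevant floor inequalities go through for large~$d$ is what pins down the constants~$4$ and~$3$ in the statement, and verifying the floor arithmetic across all residues is the most delicate part of the argument.
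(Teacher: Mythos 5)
The paper does not actually reproduce a proof of Proposition~\ref{propratio}; it only cites \cite{FoKoSh}, where the argument does run through the concatenation Lemma~\ref{lm:conc} with low-degree factors, so your overall strategy is in the right spirit. However, your specific implementation --- a greedy induction that peels one factor off the tail of the sign pattern --- has a genuine gap that you half-notice but do not close. The first problem is that you never check that the \emph{reduced} couple is admissible. When the last and third-to-last signs of $\sigma$ agree you peel off a complex-root quadratic with AP $(0,0)$; but if the tail is, say, $(+,-,+)$, that quadratic absorbs \emph{two sign changes}, so the reduced pattern has $c_1=c-2$, and if $pos=c$ (or $pos=c-1$ by parity, $pos$ can equal $c$) the pair $(pos,neg)$ is no longer admissible for $\sigma_1$ and the inductive hypothesis simply does not apply. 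The only repair is to peel a factor with two positive roots, which drops $pos$ by $2$ while $d$ drops by $2$; since $[(d-6)/3]\geq[(d-4)/3]-1$, this forces $pos\geq[(d-4)/3]+2$, one more than the hypothesis gives. A symmetric failure occurs at the other extreme with $neg=p$. These extremal configurations exist for \emph{every} sufficiently large $d$, so your claim that the residual exceptions ``constitute a finite list in each residue class'' to be absorbed into the base case is not substantiated and, as stated, is false: the set of bad couples grows with $d$.

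The underlying reason the local peeling fails is that the constant $[(d-4)/3]$ is a \emph{global} statement about the whole sign pattern: one must distribute all $(c-pos)/2$ ``lost'' positive roots onto disjoint pairs of consecutive sign changes and all $(p-neg)/2$ ``lost'' negative roots onto disjoint pairs of consecutive sign preservations, simultaneously, across the entire pattern, and then apply the concatenation lemma to the resulting global decomposition into degree $\leq 3$ (all realizable) blocks. A tail-first greedy choice cannot see whether the pairs it consumes will be needed elsewhere, which is exactly why your floor arithmetic does not close for two of the three residues of $d$ modulo $3$. To make the proof work you would need either to set up this global matching explicitly (and show the hypothesis $\min(pos,neg)>[(d-4)/3]$ guarantees it exists), or to strengthen the inductive statement so that it survives the unavoidable drops of $\min(pos,neg)$ by $2$; as written, the induction does not go through.
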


\subsection{The even and the odd series}

Suppose that the degree $d$ is even. Then the following result holds, 
(see Proposition~4 in~\cite{FoKoSh}):

\begin{prop}\label{prop:even}
Consider the SPs satisfying the following three conditions: 

\noindent
{\rm (i)} their last entry (i.e. the sign of the constant term) is a $+$;

\noindent
{\rm
(ii)} the signs of all odd monomials are $+$;

\noindent
{\rm
(iii)} among the remaining signs of even monomials there are exactly 
$\ell \geq 1$ signs $-$ (at arbitrary positions).

Then, for any such SP, the APs $(2,0), (4,0), \ldots, (2\ell ,0)$, and 
only they, are non-realizable.
\end{prop}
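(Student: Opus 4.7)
The plan is to treat Proposition~\ref{prop:even} in two halves: first show that the pairs $(2k,0)$ with $1\le k\le\ell$ are non-realizable, and then show that every other admissible pair is realized.

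For the non-realizability half, suppose some polynomial $P(x)=\sum_{j=0}^d a_jx^j$ with the prescribed sign pattern has $neg=0$, and write $P=E+O$ with $E$ the even part and $O$ the odd part. Condition (ii) makes every nonzero coefficient of $O$ strictly positive, so $O(x)>0$ on $(0,\infty)$. Since $d$ is even and the leading coefficient is positive, $P(y)\to+\infty$ as $y\to-\infty$; coupled with $a_0>0$ from (i) and the assumption that $P$ has no negative root, this forces $P(y)>0$ for all $y\le 0$, so in particular $P(-x)>0$ for $x>0$. The elementary identity $P(x)=P(-x)+2O(x)$ then gives $P(x)>0$ on $(0,\infty)$, so $pos=0$, contradicting $pos=2k\ge 2$. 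This rules out every pair $(2k,0)$ with $k\ge 1$.

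For the realizability half, the remaining admissible pairs are $(0,0)$, $(0,2m)$ with $1\le m\le (d-2\ell)/2$, and $(2k,2m)$ with $1\le k\le\ell$, $1\le m\le (d-2\ell)/2$. The Descartes pair $(2\ell,d-2\ell)$ is immediate from Proposition~\ref{prophyp}. For the remaining cases I would use the concatenation lemma (Lemma~\ref{lm:conc}) in the spirit of Example~\ref{ex:imp}: split the prescribed $\sigma$ as $(\sigma^\dagger,u,v)$ and concatenate a polynomial $P_1$ realizing $(\sigma^\dagger,u)$ with whichever of $x-1$, $x+1$, $x^2+2x+2$, $x^2-2x+2$ yields exactly $\sigma$ and moves the AP one allowed step at a time. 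Running this inductively on $d$ and on the target $(pos,neg)$, starting from a hyperbolic realization, covers every $(0,2m)$ and every $(2k,2m)$ with $k,m\ge 1$; the case $(0,0)$ is handled instead by a direct small perturbation of $\prod_{j}(x^2+b_jx+c_j)$ with positive $b_j,c_j$ tuned to place the $\ell$ prescribed minus signs at the correct even positions.

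The main obstacle is the realizability direction: the non-realizability argument is a one-line identity, but under the rigid sign constraint (odd coefficients strictly positive, exactly the prescribed even coefficients negative, the rest positive) each concatenation step must be verified to produce the specific $\sigma$ rather than merely some concatenated pattern. I expect the most delicate subcase to be small $m$ with $k$ close to $\ell$, where few spare coefficients are available to absorb the $\varepsilon$-scaling in Lemma~\ref{lm:conc} and one has to argue by hand that the resulting sign pattern remains compatible with conditions (i)--(iii).
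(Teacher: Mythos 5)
The survey does not actually prove Proposition~\ref{prop:even} in the text --- it defers to Proposition~4 of \cite{FoKoSh} --- so your attempt has to be judged on its own terms. Your non-realizability half is correct and complete: from $neg=0$ and $a_0>0$ you get $P>0$ on $(-\infty,0]$, and the identity $P(x)=P(-x)+2O(x)$ together with condition (ii), which makes the odd part $O$ strictly positive on $(0,\infty)$, forces $pos=0$; this is precisely the even/odd comparison used in the paper's proof of Theorem~\ref{tmd=4}, and it eliminates every pair $(2k,0)$ with $k\ge 1$.

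The realizability half, however, contains two genuine gaps. First, the construction you propose for the pair $(0,0)$ cannot work: if all $b_j,c_j$ are positive, then every coefficient of $\prod_j(x^2+b_jx+c_j)$ is a sum of products of positive numbers, so no minus sign can ever appear; this case should instead be dispatched by the first (unnumbered) proposition of \S 3 (take any polynomial with the given SP and increase its constant term). Second, and more seriously, the inductive scheme ``peel off $(\sigma^\dagger,u,v)$ and induct on $d$'' is not closed under the truncation you propose --- the shortened pattern has odd length and leaves the class (i)--(iii) --- so the induction hypothesis is never formulated and no step is actually verified; as written this is a plan, not a proof. A decomposition that does close up is the left-to-right one: since every $-$ sits at an even position flanked by $+$'s, write $\sigma=(+,B_1,\ldots,B_r)$ where each $-$ is enclosed in a block $(-,+)$ (realize $(+,-,+)$ with $(2,0)$ or $(0,0)$), and the remaining $+$ positions split into runs of which exactly the two extreme ones have odd length and all intermediate ones even length; group them into adjacent pairs (realize $(+,+,+)$ with $(0,2)$ or $(0,0)$) plus exactly two leftover singletons (realize $(+,+)$ with $(0,1)$). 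Iterating Lemma~\ref{lm:conc}(1) then yields every pair $(2k,2m)$ with $0\le k\le\ell$ and $1\le m\le (d-2\ell)/2$, which together with $(0,0)$ and the Descartes' pair from Proposition~\ref{prophyp} is exactly the complement of the non-realizable list.
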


Suppose now that the degree $d\geq 5$ is odd. For 
$1\leq k\leq (d-3)/2$, denote
by $\sigma _k$ the SP beginning with two pluses followed by $k$ pairs "$(-,+)$" 
and then by $d-2k-1$ minuses. Its Descartes' pair of $\sigma _k$ 
equals $(2k+1, d-2k-1)$. 
The following proposition is 
proved in~\cite{KoSh}:

\begin{tm} \label{th:series}

\noindent
{\rm (1)} The SP $\sigma _k$ is not realizable with any of the pairs 
$(3, 0), (5, 0), \ldots,$ $ (2k+1, 0)$;

\noindent
{\rm
(2)} The SP $\sigma _k$ is realizable with the pair $(1, 0)$;

\noindent
{\rm
(3)} The SP $\sigma _k$ is realizable with any of the APs 
$(2\ell + 1, 2r)$, $\ell = 0, 1, \ldots, k$, $r=1, 2, \ldots, (d-2k-1)/2$.
\end{tm}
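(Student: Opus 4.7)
The plan is to handle the three parts separately. For part~(2) I would exhibit an explicit polynomial realizing $(\sigma_k,(1,0))$: take
\[
P(x) \;:=\; x^d + x^{d-1} - x^{d-2} + \cdots + x^{d-2k-1} - \epsilon(x^{d-2k-2} + \cdots + x) - M,
\]
with $\epsilon>0$ small and $M>0$ large. The signs of its coefficients match $\sigma_k$ by construction. For $M$ large enough, $P(x)+M$ stays bounded on $(-\infty,0]$, hence $P<0$ there, giving no negative roots, while the constant $-M$ at the origin together with the leading $x^d$ at infinity force at least one positive root; for a suitable choice of $\epsilon$ and $M$ the polynomial is monotone enough on $(0,\infty)$ that this root is unique. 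For part~(3), I would use the concatenation lemma (Lemma~\ref{lm:conc}) or, alternatively, deform the hyperbolic polynomial realizing $(\sigma_k,(2k+1,d-2k-1))$ provided by Proposition~\ref{prophyp}: merge $k-\ell$ pairs of positive roots and $(d-2k-1)/2-r$ pairs of negative roots into complex-conjugate pairs along a generic continuous path, which keeps all coefficients nonzero and therefore preserves the sign pattern~$\sigma_k$.

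Part~(1) is the main content. I would first observe that merging two positive roots of a polynomial realizing $(\sigma_k,(2\ell+1,0))$ with $\ell\ge 1$ into a complex-conjugate pair, along a generic continuous deformation, yields a polynomial realizing $(\sigma_k,(2\ell-1,0))$: the sign pattern is preserved generically because no coefficient vanishes on such a path. Hence non-realizability propagates upward in $\ell$, and it suffices to prove the base case $\ell=1$ for each $k$, i.e., to show that $(\sigma_k,(3,0))$ is not realizable.

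For the base case, suppose $P$ realizes $(\sigma_k,(3,0))$ and factor $P(x) = a_d R(x) Q(x)$ with $R(x) = (x-\alpha_1)(x-\alpha_2)(x-\alpha_3) = x^3 - e_1 x^2 + e_2 x - e_3$, all $e_i>0$, and $Q(x) = \sum_{j=0}^{d-3} q_j x^j$ monic and strictly positive on $\mathbb{R}$. Writing out the convolution $R\cdot Q$ and imposing the signs of $\sigma_k$ yields, among others, $e_3 q_1 > e_2 q_0$ (from $a_1<0$), $e_2 q_1 > e_1 q_0 + e_3 q_2$ (from $a_2>0$) and $q_{d-4}>e_1$ (from $a_{d-1}>0$). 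In the case $k=1$, $d=5$, where $Q$ is a quadratic with discriminant condition $q_1^2 < 4 q_0 q_2$, a short algebraic chain combining these inequalities yields $e_1^2 < 3 e_2$, contradicting
\[
e_1^2 - 3 e_2 \;=\; \tfrac{1}{2}\sum_{1\le i<j\le 3}(\alpha_i-\alpha_j)^2 \;\ge\; 0.
\]
For general $k$, the same target contradiction $e_1^2<3e_2$ should emerge from a longer chain of coefficient inequalities combined with the appropriate consequence of $Q>0$ on $\mathbb{R}$, which for $\deg Q>2$ is more intricate than the simple quadratic discriminant. Identifying the right chain bridging the top and bottom coefficients of $\sigma_k$, and closing it to yield $e_1^2<3e_2$, is the main technical obstacle in the proof.
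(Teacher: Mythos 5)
The paper does not prove Theorem~\ref{th:series} internally (it cites \cite{KoSh}), so I judge your argument on its own terms; it contains a step that is not merely unproved but false. In both part~(1) and part~(3) you invoke the principle that two real roots of a polynomial realizing $(\sigma_k,(pos,neg))$ can be merged into a complex-conjugate pair ``along a generic continuous path'' on which ``no coefficient vanishes'', so that the sign pattern survives and realizability of $(pos,neg)$ implies realizability of $(pos-2,neg)$. This is exactly the naive expectation that the whole subject refutes: $((+,-,-,-,+),(2,2))$ is realizable while $((+,-,-,-,+),(0,2))$ is not (Theorem~\ref{tmd=4}), so no path merging the two positive roots into a complex pair can stay inside the orthant, however generically it is chosen. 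Worse, if your principle held, every admissible pair dominated by the Descartes' pair would be realizable, contradicting part~(1) of the very theorem you are proving. Hence the reduction of part~(1) to the base case $\ell=1$ is unjustified (non-realizability of $(3,0),(5,0),\dots,(2k+1,0)$ must be obtained by an argument treating each pair, or all of them uniformly), and the deformation route to part~(3) is unusable. On top of this, even the base case of part~(1) is left open: for $k=1$, $d=5$ it is part~(1) of Theorem~\ref{tmd=56}, but for general $k$ you explicitly defer ``the right chain'' of coefficient inequalities forcing $e_1^2<3e_2$, which is the entire content of the claim; it is also unclear that $e_1^2<3e_2$ remains the correct target once $\deg Q>2$, since positivity of $Q$ is then no longer a single discriminant inequality.

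Part~(2) is essentially correct and coincides with the first proposition of \S~3: push the constant term to $-\infty$; once $-a_0$ exceeds every critical value of $P-a_0$, the polynomial has a unique real root, necessarily positive, which replaces your vague ``monotone enough''. For part~(3) the concatenation route is viable but you must actually set it up: take $P_1$ realizing the sign pattern $(+,+,-,+,\dots,-,+)$ of degree $2k+1$ (the first $2k+2$ entries of $\sigma_k$, Descartes' pair $(2k,1)$) with the admissible pairs $(2\ell,1)$, $\ell=0,\dots,k$, and $P_2$ realizing $(+,-,\dots,-)$ of degree $d-2k-1$ with the admissible pairs $(1,2r-1)$, $r=1,\dots,(d-2k-1)/2$; since the last entry of $\hat\sigma_1$ is a $+$, Lemma~\ref{lm:conc}(1) then produces $(\sigma_k,(2\ell+1,2r))$. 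But the realizability of these two families of factor couples is itself a nontrivial assertion that your proposal neither states nor proves, so part~(3) also remains incomplete.
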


One can observe that Cases (1), (2) and (3) exhaust all possible 
APs $(pos, neg)$.

\section{Similar realization problems}\label{sec4}

In this section we consider realization problems similar or motivated by Problem~\ref{pb1}. A priori  it is hard to tell which of these or similar problems might have a reasonable answer. 

\subsection{$\mathcal{D}$-sequences} 

Consider a real  polynomial $P$ of degree $d$ and its derivative. By Rolle's 
theorem, if $P$ has exactly $r$ real roots 
(counted with multiplicity), then the derivative $P'$ 
has $r-1+2\ell$ real roots (counted with multiplicity), where 
$\ell \in \mathbb{N}\cup 0$. It is possible that  $P'$  has more 
real roots than $P$. For example,  for $d=2$ and $P=x^2+1$,  one gets $P^\prime=2x$ which has 
a real root at $0$ while $P$ has no real roots at all. For $d=3$, the polynomial 
$P=x^3+3x^2-8x+10=(x+5)((x-1)^2+1)$  
has one negative root and one 
complex conjugate pair, while its derivative 
$P'=3x^2+6x-8$ has one positive and one negative root. 

\medskip
Now, for $j=0$, $\ldots$, $d-1$, denote by $r_j$ and $c_j$ the numbers of real roots 
and  complex conjugate pairs of roots of the polynomial $P^{(j)}$ 
(both counted with multiplicity). These 
numbers satisfy the conditions 

\begin{equation}\label{Dseq}
r_j\leq r_{j+1}+1~~~\, ,~~~\, r_j+2c_j=d-j~.
\end{equation}

\begin{defi}
{\rm A sequence 
$((r_0,2c_0),(r_1,2c_1), \ldots,$ $ (r_{d-1},2c_{d-1}))$ satisfying 
conditions~(\ref{Dseq}) will be called  a {\em $\mathcal{D}$-sequence of length $d$}. We say that a given $\mathcal{D}$-sequence of length 
$d$ is {\em realizable} if there exists a real 
polynomial $P$ of degree $d$  with this $\mathcal{D}$-sequence, where for 
$j=0, \ldots, d-1$, all roots of $P^{(j)}$ are distinct.}
\end{defi}

\begin{ex}
{\rm One has $r_{d-1}=1$ and $c_{d-1}=0$. Clearly one has either $r_{d-2}=2$, 
$c_{d-2}=0$ 
or $r_{d-2}=0$, $c_{d-2}=1$. For small values of $d$, one has the following 
$\mathcal{D}$-sequences and respective polynomials realizing them:}

$$\begin{array}{lll}
d=1&(1,0)&x\\ \\ d=2&((2,0),(1,0))&x^2-1\\ \\ &((0,2),(1,0))&x^2+1\\ \\ 
d=3&((3,0),(2,0),(1,0))&x^3-x\\ \\ &((1,2),(0,2),(1,0))&x^3+x\\ \\  
&((1,2),(2,0),(1,0))&x^3+10x^2+26x.
\end{array}$$ 
\end{ex}  

\medskip
The following question a positive answer to which can be found   
in~\cite{KoDBAN07} seems very natural. 

\begin{problem}\label{pb2}
Is it true that for any $d\in \mathbb{N}$, any $\mathcal{D}$-sequence 
is realizable?
\end{problem}

\subsection{Sequences of admissible pairs}

Now we are going to formulate a problem which is a refinement of both 
Problems~\ref{pb1} and~\ref{pb2}. 

\smallskip
Recall that for a  real 
polynomial $P$ of degree $d$, the signs of its coefficients $a_j$ 
define the sign patterns $\sigma _0, 
\sigma _1, \ldots, \sigma _{d-1}$ corresponding to 
$P$ and to all its derivatives of order $\leq d-1$ since the SP 
$\sigma _j$ is obtained from $\sigma _{j-1}$ by deleting the last component. 
We denote by 
$(c_k, p_k)$ and $(pos_k, neg_k)$ the Descartes' and admissible pairs for 
the SPs $\sigma _k, \; k=0, \ldots, d-1$. The following restrictions 
follow from Rolle's theorem:  

\begin{equation}\label{Rolle1}
\begin{array}{lcl}
pos_{k+1}\geq pos_k-1&,&neg_{k+1}\geq neg_k-1\\ \\ {\rm and}&
pos_{k+1}+neg_{k+1}\geq pos_k+neg_k-1~.\end{array}
\end{equation}
It is always true that 

\begin{equation}\label{Rolle2}
pos_{k+1}+neg_{k+1}+3-pos_k-neg_k\in 2\mathbb{N}~.
\end{equation}   

\begin{defi}
{\rm Given a sign pattern $\sigma _0$ of length $d+1$, suppose that for 
$k=0, \ldots, d-1$, the pair 
$(pos_k,neg_k)$ satisfies the conditions 

\begin{equation}\label{eqDescartes}
\begin{array}{cccc}
pos_k\leq c_k,&c_k-pos_k\in 2\mathbb{Z},\\ \\ 
neg_k\leq p_k,&p_k-neg_k\in 2\mathbb{Z},\\ \\ 
{\rm and}&{\rm sgn}~a_k=(-1)^{pos_k}.\end{array}
\end{equation}
as wel as the inequalities (\ref{Rolle1}) -- (\ref{Rolle2}). 
Then we say that 
\begin{equation}\label{eq*}
(~(pos_0,neg_0)~~~,~~~\ldots ~~~,~~~(pos_{d-1},neg_{d-1})~)
\end{equation} 
is a {\em sequence of admissible pairs (SAP)}. In other words, it is  
 a sequence of pairs admissible for the sign pattern $\sigma _0$ 
in the sense of these conditions. We say 
that a given couple (SP, SAP) is {\em realizable} 
if there exists a polynomial $P$ whose coefficients 
have signs given by the SP $\sigma _0$ and such that 
for $k=0, \ldots, d-1$, the polynomial $P^{(k)}$ has exactly $pos_k$ 
positive and $neg_k$ negative roots, all of them being simple. Complex roots 
are also supposed to be distinct.}
\end{defi}

\begin{rem}\label{remformula}
{\rm If one only knows  the SAP $(\ref{eq*})$, 
the SP $\sigma _0$ can be restituted 
by the formula}  
$$\sigma _0=(~+~,~(-1)^{pos_{d-1}}~,~(-1)^{pos_{d-2}}~,~\ldots ~,~(-1)^{pos_0}~).$$ 

{\rm Nevertheless in order to make comparisons with 
Problem~\ref{pb1} more easily, we 
consider couples (SP, SAP) instead of just SAPs.   
But for a given SP,  there are, in general, 
several possible SAPs which is illustrated by the following example.}
\end{rem}

\begin{ex}\label{exSAPAP}
{\rm Consider the SP of length $d+1$ with all pluses. For $d=2$ and $3$, 
there are respectively two and three possible SAPs: 

$$\begin{array}{llllll}
((0,2),(0,1))&,&((0,0),(0,1))&,&&{\rm for~}d=2\\ \\ {\rm and}&&&&& \\ \\  
((0,3),(0,2),(0,1))&,&  
((0,1),(0,2),(0,1))&,&((0,1),(0,0),(0,1))&{\rm for}~d=3~.\end{array}$$ 

 For $d=4,5,6,7,8,9,10$, the  
numbers $A(d)$ of SAPs compatible with 
the  SP of length $d+1$ having all pluses are  

$$7,~~~\, ~~~\, 12,~~~\, ~~~\, 30,~~~\, ~~~\, 55,~~~\, ~~~\,  
143,~~~\, ~~~\, 273,~~~\, {\rm and}~~~\, 728$$
respectively. 
One can show  that $A(d)\geq 2A(d-1)$, if $d\geq 2$ is even, and 
$A(d)\geq 3A(d-1)/2$, if $d\geq 3$ is odd, see~\cite{ChGaKo}.}
\end{ex}     

\begin{ex}\label{exSAPAPbis}
{\rm There are two couples (SP, SAP) corresponding to the couple (SP, AP) 
$C:=((+,+,-,+,+)$, $(0,2))$; we also say  that the couple $C$ 
{\em can be extended} into these couples (SP, SAP). These are} 

$$\begin{array}{cccccccccccc}
(&(+,+,-,+,+)&,&(0,2)&,&(2,1)&,&(1,1)&,&(0,1)&)&{\rm and}\\ \\ 
(&(+,+,-,+,+)&,&(0,2)&,&(0,1)&,&(1,1)&,&(0,1)&)&.\end{array}$$

{\rm Indeed, by Rolle's theorem, the derivative of a polynomial realizing 
the couple $C$ has at least one negative root. 
By conditions (\ref{eqDescartes}) 
this derivative (whose degree equals 3) has an even number of 
positive roots. This yields just two possibilities for $(pos _1, neg_1)$, namely $(2,1)$ and $(0,1)$. 
The second derivative is a quadratic polynomial 
with positive leading coefficient and negative 
constant term. Hence it has a positive and a negative 
root. The realizability of the above two couples (SP, SAP) 
is proved in~\cite{ChGaKo}.}
\end{ex}  

Our final realization problem is as follows:

\begin{problem}\label{pb3}
For a given degree $d$, which couples (SP, SAP) are realizable?
\end{problem}

\begin{rems}\label{remrefinement} 
{\rm (1) This problem is a refinement of Problem~\ref{pb1}, 
because one considers 
the APs of the derivatives of all orders, and not just the one of 
the polynomial itself, see Remark~\ref{remformula}. 
Therefore if a given couple (SP, AP) is non-realizable, then all couples 
(SP, SAP) corresponding to it in the sense of Example~\ref{exSAPAPbis} are 
automatically  non-realizable. \\  

\noindent
(2) Obviously Problem~\ref{pb3} is a refinement of Problem~\ref{pb2} -- in the 
latter case one does not take into account the signs of the real roots of the 
polynomial and its derivatives.\\ 

\noindent 
(3) When we deal with couples (SP, SAP),  we can use the $\mathbb{Z}_2$-action 
defined by (\ref{eqfirstgenerator}). Therefore  it suffices to consider the cases 
of SPs beginning with $(+,+)$. The generator (\ref{eqsecondgenerator}) of the 
$\mathbb{Z}_2\times \mathbb{Z}_2$-action cannot be used, because when the 
derivatives of a polynomial are involved, the polynomial loses its last 
coefficients. Due to this circumstance the two ends of the SP  cannot be treated equally.}
\end{rems}

The following proposition is proved in~\cite{ChGaKo}:

\begin{prop}\label{mainprop1}
For any given SP of length $d+1$, $d\geq 1$, there exists a unique SAP such 
that $pos_0+neg_0=d$. This SAP is realizable. For the given SP, 
this pair $(pos_0, neg_0)$ is its Descartes' pair. 
\end{prop}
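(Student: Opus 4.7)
The plan is to reduce everything to the observation that $pos_0+neg_0=d$ forces the realizing polynomial to be \emph{hyperbolic}, and then to let Rolle's theorem do the remaining work on each derivative. First I would read off the last sentence of the proposition directly from Descartes' rule: the two standard bounds give $pos_0\le c_0$ and $neg_0\le p_0$, while $c_0+p_0=d$ by definition, so the hypothesis $pos_0+neg_0=d$ saturates both inequalities simultaneously and forces $(pos_0,neg_0)=(c_0,p_0)$. In particular the first pair of any SAP satisfying the hypothesis is completely determined by the SP $\sigma_0$.

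Next I would propagate this to every derivative. A polynomial of degree $d$ with $d$ real roots is hyperbolic, so by Rolle's theorem $P'$ has a root strictly between each pair of consecutive real roots of $P$; since $\deg P'=d-1$, this accounts for all of its roots and $P'$ is itself hyperbolic. Because $P=\sum a_j x^j$ has all coefficients nonzero, the SP of $P'=\sum j a_j x^{j-1}$ is obtained from $\sigma_0$ by dropping the last entry, i.e.\ it equals $\sigma_1$. Iterating, each $P^{(k)}$ is a hyperbolic polynomial of degree $d-k$ with SP $\sigma_k$, so the same saturation argument yields $(pos_k,neg_k)=(c_k,p_k)$. This uniquely determines the SAP and shows that any SAP with $pos_0+neg_0=d$ must be the one described by the Descartes' pairs of the successive truncations of $\sigma_0$.

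Finally, for realizability I would invoke Proposition~\ref{prophyp}, which supplies a polynomial with the given SP realizing its Descartes' pair, hence hyperbolic. The Rolle argument of the previous paragraph then shows that this same polynomial realizes the unique SAP constructed above. Simplicity of the real roots of each $P^{(k)}$ (and distinctness of complex roots, of which there are none) can be arranged by a small generic perturbation within the stratum of hyperbolic polynomials with SP $\sigma_0$: the condition that all derivatives have simple roots is open and dense there, and a sufficiently small perturbation preserves the signs of the coefficients. I do not expect a serious obstacle; the only mildly delicate point is verifying that the SP of $P^{(k)}$ is indeed $\sigma_k$, which is immediate from the nonvanishing of the coefficients.
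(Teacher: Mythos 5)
The paper does not actually prove this proposition; it only cites \cite{ChGaKo}, so there is no in-text argument to compare against. Your proposal is essentially correct and is the natural proof: saturation of the two Descartes inequalities forces $(pos_0,neg_0)=(c_0,p_0)$, the same saturation propagates down the derivatives, and realizability comes from Proposition~\ref{prophyp}. One point should be tightened. A SAP is a purely combinatorial object defined by conditions (\ref{eqDescartes})--(\ref{Rolle2}); it need not come from any polynomial. So the uniqueness claim cannot be argued by applying Rolle's theorem to a realizing polynomial $P$ (a priori some SAP with $pos_0+neg_0=d$ might be non-realizable, and your hyperbolicity argument would say nothing about it). Instead, run the identical saturation directly on the defining inequalities: the third inequality of (\ref{Rolle1}) gives $pos_{k+1}+neg_{k+1}\geq pos_k+neg_k-1$, while (\ref{eqDescartes}) gives $pos_{k+1}+neg_{k+1}\leq c_{k+1}+p_{k+1}=d-k-1$, so by induction $pos_k+neg_k=d-k$ and $(pos_k,neg_k)=(c_k,p_k)$ for all $k$. (One should also check that this candidate sequence really satisfies the sign condition $\sgn\, a_k=(-1)^{pos_k}$ in (\ref{eqDescartes}); this holds because $s_d=+$ and $s_k=(-1)^{c_k}$ for any sign pattern.) Finally, your closing perturbation is unnecessary: a hyperbolic polynomial with $d$ distinct roots automatically has all derivatives hyperbolic with distinct roots, since Rolle places exactly one root of $P^{(k+1)}$ strictly between consecutive roots of $P^{(k)}$, and nonvanishing of the coefficients rules out roots at the origin.
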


\begin{ex}
{\rm For even $d$,  consider  the  SP with all pluses. Any hyperbolic polynomial 
with all negative and distinct  roots realizes this SP with SAP 

$$(\, (0,d)\, ,\, (0,d-1)\, ,\, \ldots \, ,\, (0,1)\, )\, .$$
One can choose such a polynomial $P$ with all $d-1$ distinct critical values. 
Hence in the family of polynomials $P+t$, $t>0$, one encounters polynomials 
realizing this SP with any of the SAPs 
$$(\, (0,d-2\ell )\, ,\, (0,d-1)\, ,\, (0,d-2)\, ,\, 
\ldots \, ,\, (0,1)\, )\, \, ,\, \, 
\ell \, =\, 0\, ,\, 1\, ,\, \ldots \, d/2\, .$$
In the same way, for odd $d$,  
the SP $(+,+,\ldots ,+,-)$ is realizable with 
the SAP 

$$(\, (1,d-1)\, ,\, (0,d-1)\, ,\, (0,d-2)\, ,\, \ldots \, ,\, (0,1)\, )$$
by some hyperbolic polynomial $R$ 
with all  distinct roots and  critical values. 
In the family of polynomials $R-s$, $s>0$,  one encounters polynomials 
realizing this SP with any of the SAPs} 
$$(\, (1,d-1-2\ell )\, ,\, (0,d-1)\, ,\, (0,d-2)\, ,\, 
\ldots \, ,\, (0,1)\, )\, \, ,\, \, 
\ell \, =\, 0\, ,\, 1\, ,\, \ldots \, (d-1)/2\, .$$
\end{ex}

For $d\leq 5$,  the following exhaustive answer to Problem~\ref{pb3}  
is given in \cite{ChGaKo}:\\  

(A) For $d=1$, $2$ and $3$, all couples (SP, SAP) are realizable. \\  

(B) For $d=4$, the couple (SP, SAP) 

$$(~(+,+,-,+,+)~,~(2,0)~,~(2,1)~,~(1,1)~,~(0,1)~)~,$$
and only it (up to the $\mathbb{Z}_2$-action), 
is non-realizable. Its non-realizability follows from 
the one of the couple (SP, AP) 
$C^{\dagger}:=((+,+,-,+,+),(2,0))$, 
see Theorem~\ref{tmd=4}. 

One can observe that the couple $C^{\dagger}$ 
can be uniquely extended  into a couple (SP, SAP). Indeed, the first 
derivative has a positive constant term hence an even number of positive roots. 
This number is positive by Rolle's theorem. Hence the AP of the first 
derivative is $(2,1)$. In the same way,  
one obtains the APs $(1,1)$ and $(0,1)$ 
for the second and third derivatives respectively. \\ 

(C) For $d=5$, the following couples (SP, SAP), and only they, 
are non-realizable:

$$\begin{array}{ccccccccccccc}
(&(+,+,-,+,+,+)&,&(2,1)&,&(2,0)&,&(2,1)&,&(1,1)&,&(0,1)&)~,\\ \\ 
(&(+,+,-,+,+,+)&,&(0,1)&,&(2,0)&,&(2,1)&,&(1,1)&,&(0,1)&)~,\\ \\ 
(&(+,+,-,+,+,-)&,&(3,0)&,&(2,0)&,&(2,1)&,&(1,1)&,&(0,1)&)~,\\ \\ 
(&(+,+,-,+,+,-)&,&(1,0)&,&(2,0)&,&(2,1)&,&(1,1)&,&(0,1)&)~,\\ \\ 
(&(+,+,-,+,-,-)&,&(3,0)&,&(3,1)&,&(2,1)&,&(1,1)&,&(0,1)&)~.
\end{array}$$

\medskip
The non-realizability of the first four of them follows from that of the 
couple $C^{\dagger}$. The last one is implied by  part (1) of 
Theorem~\ref{tmd=56}; it is true that the couple (SP, AP) 
$((+,+,-,+,-,-),(3,0))$ extends in a unique way into a couple (SP, SAP), 
and this is the fifth of the  five  such couples cited above. 

One of the methods used in the study of couples (SP, AP) or (SP, SAP) is 
the explicit construction of polynomials with multiple roots which define 
a given SP. Such constructions are not difficult to carry out because one has 
to use families of polynomials with fewer 
parameters. Once a polynomial with multiple roots is constructed, one has 
to justify the possibility to deform it continuously into a nearby 
polynomial with all  distinct roots. Multiple roots can give rise to 
complex conjugate pairs of roots. An example of such a construction is the 
following lemma from~\cite{ChGaKo}:

\begin{lm}\label{lm23}
Consider the polynomials $S:=(x+1)^3(x-a)^2$ and $T:=(x+a)^2(x-1)^3$, $a>0$. 
Their coefficients of $x^4$ are  
positive if and only if respectively $a<3/2$ and $a>3/2$. 
The coefficients of the 
polynomial $S$ define the SP

$$\begin{array}{lllc}
(+,+,+,+,-,+)&{\rm for}&a\in (~0~,~(3-\sqrt{6})/3~)&,\\ \\  
(+,+,+,-,-,+)&{\rm for}&a\in (~(3-\sqrt{6})/3~,~3-\sqrt{6}~)&,\\ \\ 
(+,+,-,-,-,+)&{\rm for}&a\in (~3-\sqrt{6}~,~2/3~)&{\rm and}\\ \\ 
(+,+,-,-,+,+)&{\rm for}&a\in (~2/3~,~3/2~)&.\end{array}$$
The coefficients of $T$ define the SP

$$\begin{array}{lllc}
(+,+,-,+,+,-)&{\rm for}&a\in (~3/2,~(3+\sqrt{6})/3~)&,\\ \\  
(+,+,-,-,+,-)&{\rm for}&a\in (~(3+\sqrt{6})/3~,~3+\sqrt{6}~)&{\rm and}\\ \\ 
(+,+,+,-,+,-)&{\rm for}&a>3+\sqrt{6}&.\end{array}$$
\end{lm}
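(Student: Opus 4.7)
The plan is purely computational: expand $S$ and $T$ into explicit degree-$5$ polynomials whose coefficients are polynomials in $a$ of degree at most~$2$, locate on the positive real axis the zeros of each coefficient viewed as a function of $a$, and then read off the sign pattern on every maximal open subinterval of $(0,\infty)$ on which no coefficient vanishes.

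I would first expand $S=(x+1)^3(x-a)^2$ and collect powers of $x$, obtaining for $(x^5,x^4,x^3,x^2,x,1)$ the coefficient vector $(1,\,3-2a,\,a^2-6a+3,\,3a^2-6a+1,\,3a^2-2a,\,a^2)$. The leading term and the constant term are manifestly positive, and $3-2a>0$ is exactly the condition $a<3/2$ stated in the lemma. On $(0,3/2)$ the three remaining coefficients change sign respectively at the positive root $a=3-\sqrt{6}$ of $a^2-6a+3$ (the other root $3+\sqrt{6}$ lying above $3/2$), at $a=(3-\sqrt{6})/3$ of $3a^2-6a+1$ (the other root $(3+\sqrt{6})/3\approx 1.817$ again exceeding $3/2$), and at $a=2/3$ of $a(3a-2)$. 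A direct comparison yields the ordering $(3-\sqrt{6})/3 < 3-\sqrt{6} < 2/3 < 3/2$, so $(0,3/2)$ splits into four open subintervals, and evaluating the sign of each coefficient at one test point per subinterval reproduces the four sign patterns listed for $S$.

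For $T=(x+a)^2(x-1)^3$ I would exploit the identity $T(x)=-S(-x)$, which follows at once from $S(-x)=(1-x)^3(-x-a)^2=-(x-1)^3(x+a)^2$. This flips the signs of the coefficients of $x^4,\,x^2,\,x^0$ relative to $S$ and leaves those of $x^5,\,x^3,\,x$ unchanged, giving the coefficient vector $(1,\,2a-3,\,a^2-6a+3,\,-(3a^2-6a+1),\,3a^2-2a,\,-a^2)$. Now $2a-3>0$ is exactly $a>3/2$, the constant term is always negative, and on $(3/2,\infty)$ the only relevant sign changes occur at $(3+\sqrt{6})/3$ (from $3a^2-6a+1$) and at $3+\sqrt{6}$ (from $a^2-6a+3$), which satisfy $3/2 < (3+\sqrt{6})/3 < 3+\sqrt{6}$. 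Reading off the signs on the three resulting open subintervals matches the three patterns claimed for $T$.

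The only real obstacle is bookkeeping: one has to order the four critical values on $(0,3/2)$ and the two on $(3/2,\infty)$ correctly, and then track the sign of each at-most-quadratic expression in $a$ in the right direction on each subinterval. There is no conceptual subtlety; the observation $T(x)=-S(-x)$ roughly halves the labor and is, in my view, the one small structural remark that the proof requires.
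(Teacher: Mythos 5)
Your computation is correct: the coefficient vectors for $S$ and $T$, the critical values of $a$, their ordering, and the resulting sign patterns all check out, and the observation $T(x)=-S(-x)$ is a pleasant shortcut. The paper itself gives no proof (it cites \cite{ChGaKo}), but the statement admits essentially only this direct expand-and-compare argument, so your proposal is a complete and valid proof.
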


\section{Outlook}\label{sec:outook}

\noindent
{\bf 1.} Our  first open question deals with  the limit of the ratio between 
the quantities $R(d)$ of all realizable and $A(d)$ of all possible cases 
of couples (SP, AP) as $d\rightarrow \infty$. In principle, one does 
not have to take into account the 
$\mathbb{Z}_2\times \mathbb{Z}_2$-action in order not to face the problem of 
the two different possible lengths of orbits ($2$ and $4$). 

A priori, for $d\geq 4$, one  has $R(d)/A(d)\in (0,1)$. It would be interesting to find out whether this ratio has a limit as $d\rightarrow \infty$, and if 'yes`, 
whether this limit is $0$, $1$ or belongs to $(0,1)$. In the latter 
case it would be interesting to find the exact value. 

A less ambitious open problem is to find an interval 
$[\alpha ,\beta ]\subset (0,1)$ to which this ratio belongs for 
any $d\in \mathbb{N}$, $d\geq 4$, or at least for $d$ sufficiently large.\\

\noindent 
{\bf 2.} A related problem  would be to find  sufficient 
conditions for 
realizability based on the ratios between the quantities $pos$, $neg$ and $d$. 
On one hand, when the ratios $pos/d$ and $neg/d$ are both large enough, 
one has realizability, see Proposition~\ref{propratio}. On the other hand, 
in all examples of non-realizability known up to now one of the quantities 
$pos$ and $neg$ is either $0$ or is very small compared to the other one. Thus it 
would be interesting to understand  the role of these ratios for 
the (non)-realizability of the couples (SP, AP). \\    

\noindent
{\bf
3.} Our third open question is about the realizability of couples (SP, SAP). 
For $d\leq 5$, the non-realizability of all non-realizable 
couples (SP, SAP) results 
from the non-realizability of the corresponding couples (SP, AP). In principle, 
one could imagine a situation in which there exists a couple (SP, AP)  
extending into several couples (SP, SAP) some of which are realizable and the 
remaining are not.  Whether for $d\geq 6$, such couples 
(SP, AP) exist or not is unknown at present.\\

\noindent
{\bf
4.} Our final natural and important question deals with the topology of intersections of the set of real univariant polynomials with a given number of real roots with orthants in the coefficient space (which means fixing the signs of the coefficients). It is well-known that the set of monic univariate polynomials of a given degree and with a given number of real roots is contractible. When we cut this set with the union of coordinate hyperplanes (coordinates being the coefficients of polynomials), then it splits into a number of connected components.  In each such connected component the number of positive and negative roots are fixed. But, in principle, it can happen that different connected components correspond to the same pair (pos, neg). Could this 
really happen? Are all such connected components contractible, or  they can have some non-trivial topology? 

\medskip\noindent
{\it Acknowldegements.}  The authors want to thank departments of mathematics of  Universit\'e C\^ote d'Azur and Stockholms Universitet for the hospitality,  financial support, and nice working conditions during our several visits to each other.

\end{document}